\documentclass{article}

\usepackage[preprint]{neurips_2026}

\usepackage[utf8]{inputenc} % allow utf-8 input
\usepackage[T1]{fontenc}    % use 8-bit T1 fonts
\usepackage{hyperref}       % hyperlinks
\usepackage{url}            % simple URL typesetting
\usepackage{booktabs}       % professional-quality tables
\usepackage{amsfonts}       % blackboard math symbols
\usepackage{nicefrac}       % compact symbols for 1/2, etc.
\usepackage{microtype}      % microtypography
\usepackage{xcolor}         % colors

\title{The sublevel Flood bifiltration: towards scalable 2-parameter persistent homology}

\author{%
  Mattéo Clémot\\
  Université Lyon 1\\
  \texttt{matteo.clemot@univ-lyon1.fr} \\
  \And
  Julie Digne \\
  CNRS, Université Lyon 1\\
  \texttt{julie.digne@cnrs.fr} \\
  \And
  Julien Tierny\\
  CNRS, Sorbonne Université \\
  \texttt{julien.tierny@cnrs.fr} \\
}

\usepackage{amsmath,amssymb,amsthm,stmaryrd}
\usepackage[percent]{overpic}
\usepackage{wrapfig}
\usepackage{subcaption}
\usepackage[nameinlink,capitalise]{cleveref}
\usepackage{multirow}

\graphicspath{{figures/}}

\definecolor{customRed}{HTML}{cf244b}
\definecolor{customBlue}{HTML}{493df5}

\newcommand{\ddim}{d}
\newcommand{\hdim}{k}

\newcommand{\bbi}{\mathbb{I}}
\newcommand{\bbk}{\mathbb{K}}
\newcommand{\bbn}{\mathbb{N}}
\newcommand{\bbr}{\mathbb{R}}

\newcommand{\cali}{\mathcal{I}}
\newcommand{\calk}{\mathcal{K}}

\newcommand{\calw}{\mathcal{W}}

\DeclareMathOperator{\conv}{conv}

\DeclareMathOperator{\meb} {meb}
\DeclareMathOperator{\nrv} {nrv}

\DeclareMathOperator{\vor} {Vor}

\newcommand{\del}{\ensuremath{\mathrm{Del}}}

\newcommand{\cech}{\v{C}ech}

\newcommand{\deln}{\mathcal{D}}

\newcommand{\offsetf}[1]{\mathcal{O}_{#1}}
\newcommand{\cechf}[1]{\mathcal{\check{C}}_{#1}}

\newcommand{\alphaf}[1]{\mathcal{D}_{#1}}
\newcommand{\ripsf}[1]{\mathcal{R}_{#1}}

\newcommand{\homol}[1]{H_{#1}}
\newcommand{\dgm}[1]{\mathrm{dgm}_{#1}}

\DeclareMathOperator{\bottle}{\mathrm{d_b}}

\DeclareMathOperator{\hausdo}{\mathrm{d_{H}}}
\DeclareMathOperator{\interl}{\mathrm{d_{I}}}
\newcommand{\wasser}[1]      {\calw_{#1}}

\DeclareMathOperator{\flood}{\mathcal{F}}
\DeclareMathOperator{\cflood}{\widetilde{\mathcal{F}}}
\DeclareMathOperator{\floodmodule}{\mathbb{F}}

\DeclareMathOperator{\pareto}{Pareto}

\newcommand{\ball}{B}

\newcommand{\incr}{\cali}
\DeclareMathOperator{\matchd}{\mathrm{d_{match}}}

\DeclareMathOperator{\fflood}{fl}
\newcommand{\dist}{\mathrm{d}}

\newcommand{\vstar}{\mathrm{V}}
\newcommand{\cell}{\mathrm{C}}
\newcommand{\oom}{OOM}

\newtheorem{theorem}{Theorem}

\newcommand{\np}{\textsf{NP}}

\DeclareMathOperator{\supp}{\mathrm{supp}}
\DeclareMathOperator{\bigrades}{\phi}

\newcommand{\lipconst}{\ell}

\newcommand{\changed}[1]{#1}

\newcommand{\Xincr}[1]{X_{\leq{#1}}}

\begin{document}

\maketitle

\begin{abstract}
  Multiparameter persistent homology is a rapidly developing branch of topological data analysis that improves the robustness of single-parameter persistent homology to outliers, while still capturing the metric characteristics of the data.
  However, a notable limitation is its lack of scalability.
  In this paper, we introduce a novel approach for efficiently computing 2-parameter persistent homology on large point sets.
  Our work extends the Flood filtration, originally developed for single-parameter persistence.
  Our construction, called the \textit{sublevel Flood bifiltration}, offers a scalable approximation of the sublevel offset bifiltration.
  We show that it benefits from theoretical stability properties and describe how to compute it efficiently.
  We demonstrate the performance of our approach in classification tasks on low-dimensional synthetic datasets, where density awareness is critical, as well as on real-world time series datasets.
  %Our code will be made publicly available soon.
\end{abstract}

%Multiparameter persistent homology is a rapidly developing branch of topological data analysis that improves the robustness of single-parameter persistent homology to outliers, while still capturing the metric characteristics of the data. However, a notable limitation is its lack of scalability. In this paper, we introduce a novel approach for efficiently computing 2-parameter persistent homology on large point sets. Our work extends the Flood filtration, originally developed for single-parameter persistence. Our construction, called the sublevel Flood bifiltration, offers a scalable approximation of the sublevel offset bifiltration. We show that it benefits from theoretical stability properties and describe how to compute it efficiently. We demonstrate the performance of our approach in classification tasks on low-dimensional synthetic datasets, where density awareness is critical, as well as on real-world time series datasets.

\section{Introduction}

Topological data analysis (TDA) has lately emerged as a toolbox to handle geometric and topological properties of data, using ideas from algebraic topology.
It provides concise and stable representations encoding topological information that is often not captured by classical approaches.
It has been successfully used in machine learning, in a variety of domains and tasks, including
dimensionality reduction \cite{moor2020topological,trofimov2023learning}, classification \cite{reininghaus2015stable} or AI interpretability \cite{gardinazzi2025persistent, fay2026shape} (see \cite{hensel2021survey} for a recent survey).
One of the main tools of TDA is \textit{persistent homology} (PH), which considers \textit{filtrations}, typically,
nested sequences of simplicial complexes that represent the data at different scales. The evolution of their topological features (connected components, loops, cavities) as the scale parameter increases is then summarized into concise and stable representations, like the \textit{persistence barcode} or the \textit{persistence diagram}.
This single-parameter framework is now mathematically well-understood \cite{edelsbrunner2010computational}.

\paragraph{Single-parameter persistent homology}
For the analysis of Euclidean point sets, common filtrations include the Rips and \cech\ filtrations.
However, as their number of simplices increases rapidly with the number of vertices (exponentially in general, cubically when considering only their 2-skeletons for 1-dimensional PH),
these filtrations become computationally intensive both in memory and runtime, which often limits the size of the input.
To address this limitation, several approaches have been proposed: algorithmic optimization~\cite{bauer_ripser_2021} with efficient, parallel implementation on CPU~\cite{perez2021giotto} or GPU~\cite{zhang2020gpu};
sparse linear-size approximations~\cite{sheehy2012linear};
edge collapses for reducing flag filtrations~\cite{boissonnat2020edge,glisse2022swap};
and for low-dimensional Euclidean point sets, Delaunay-based filtrations
(such as Delaunay, Delaunay--\cech\ \cite{bauer2017morse} and Delaunay--Rips \cite{mishra2023stability, clemot2025delaunay} filtrations) which feature far fewer simplices.
More recently, Graf et al. introduced the \textit{Flood filtration}~\cite{graf2025floodcomplex}. For an input point set $X$, this filtration is built on the Delaunay complex of a smaller set of landmarks $L$, yet it accounts for the entire set $X$, since each simplex is introduced as soon as its convex hull is fully covered by balls of radius $r$ centered at points in $X$.
It is stable under perturbations of $X$, and, in $\bbr^2$, is proven to approximate
the offset filtration of $X$.
In practice, this approach allows the computation of filtration values to be parallelized on GPUs, resulting in significant acceleration compared to the Delaunay--\cech\ PH, for large point sets.

\begin{figure}
  \centering
  \begin{subfigure}[b]{0.23\textwidth}
      \includegraphics[width=\textwidth]{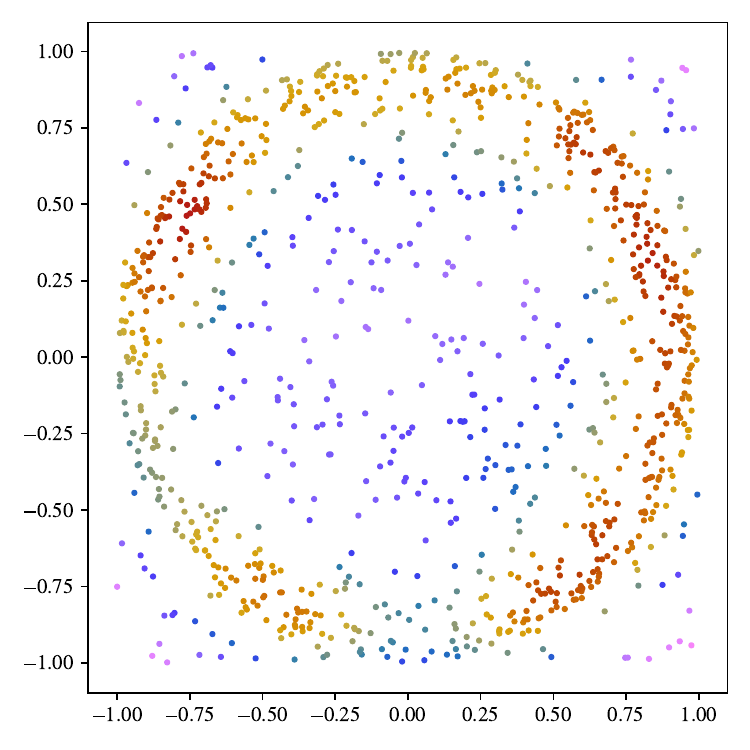}
      \caption{}
  \end{subfigure}
  \hfill
  \begin{subfigure}[b]{0.25\textwidth}
      \includegraphics[width=\textwidth]{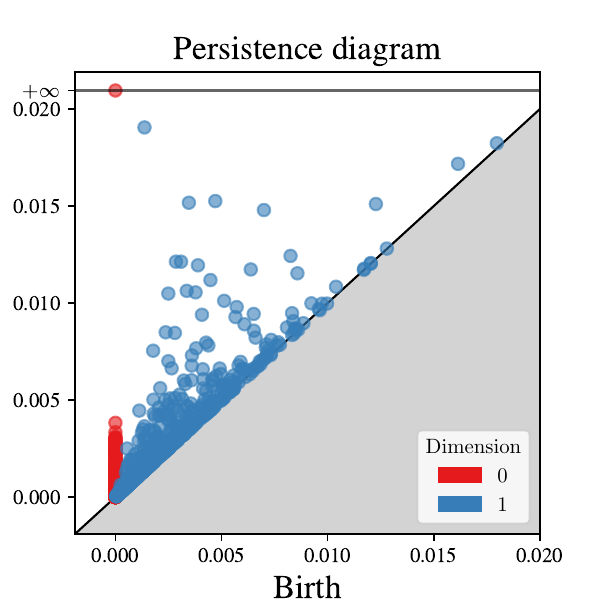}
      \caption{}
  \end{subfigure}
  \begin{subfigure}[b]{0.25\textwidth}
      \includegraphics[width=\textwidth]{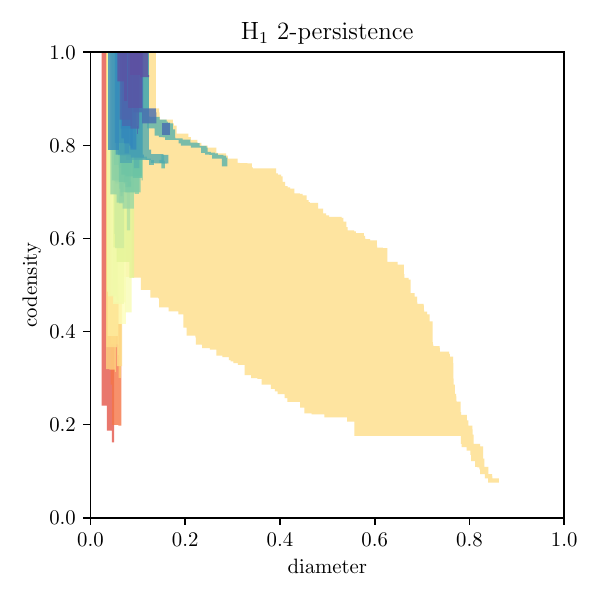}
      \caption{}
  \end{subfigure}
  \hfill
  \begin{subfigure}[b]{0.25\textwidth}
      \includegraphics[width=\textwidth]{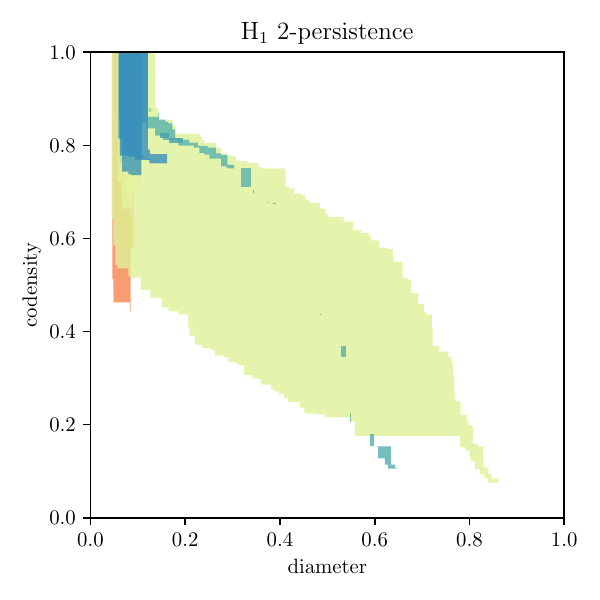}
      \caption{}
  \end{subfigure}
  \caption{
    \textbf{(a)} A point set $X\subset\bbr^2$ colored by its kernel density estimate (KDE).
    \textbf{(b)} The persistence diagram of its Delaunay--(\cech) filtration
    captures many small cycles (blue dots) with similar persistence (between $0.01$ and $0.02$).
    \textbf{(c)} Multiparameter module approximation (MMA) of the sublevel Delaunay bifiltration.
    \textbf{(d)} MMA of our sublevel Flood bifiltration (100 landmarks). The yellow (c) and green (d) large shapes both distinctively capture the prominent ring (a) with high density and large diameter (up to $0.8$).}
  \label{fig_teaser}
\end{figure}

\paragraph{Multiparameter persistent homology}

Single-parameter PH, especially for diameter-based filtrations, is highly sensitive to outliers.
To mitigate this limitation, while still capturing the metric properties of the input data, one can consider more than one parameter, for instance by filtering with respect to both the geometric scale and some density estimate (see \cref{fig_teaser}). This has given rise to multiparameter PH~\cite{carlsson2007theory}, which turns out to be more challenging mathematically and computationally.

\paragraph{Bifiltrations}
One way to define a density-aware bifiltration is to introduce a second (discrete) parameter that encodes some notion of degree or connectivity.
Examples include the degree-Rips and degree-\cech\ bifiltrations~\cite{lesnick2015interactive}, the multicover bifiltration~\cite{edelsbrunner2021multi,corbet2023computing} and its sparse linear-size approximations~\cite{alonso2025sparse}, and the Core and Delaunay--Core bifiltrations~\cite{blaser2025core}. These constructions are all interleaved and benefit from stability properties with respect to the Prohorov distance~\cite{blumberg2024stability}.
Another approach is to build a bifiltration over a point set $X$ using an explicit function $\gamma:X\to\bbr$ accounting for the point set density. $\gamma$ is typically an estimate of codensity (i.e., the opposite of a density estimation, for instance obtained by kernel density estimation (KDE) or by distance-to-a-measure (DTM)~\cite{chazal2011geometric,chazal2018robust}) but can also be an estimation of eccentricity, of curvature, or even an application-provided function (e.g., the partial charge of atoms in a molecule~\cite{keller2018persistent}).
This offers more flexibility, but at the cost of no longer being parameter-free.
In this setting, the theoretical reference bifiltration is the \textit{sublevel offset bifiltration}, which, at scale $(r,s)$, consists of the union of balls of radius $r$ centered at points $x\in X$ such that $\gamma(x)\leq s$, i.e.,
$\offsetf{r,s}(X,\gamma)=\bigcup_{x\in X_s}B_r(x)$ where  $X_s=\gamma^{-1}((-\infty,s])$ (see \cref{fig:sublevel_flood_illustration}).
In practice, one can consider the \textit{sublevel Rips bifiltration}~\cite{botnan2023introduction} given by $\ripsf{r,s}(X,\gamma)=\ripsf{r}(X_s)$.
In low-dimensional Euclidean spaces, the \textit{sublevel Delaunay bifiltration}~\cite{alonso2024delaunay} is far smaller and far more efficient. It constructs a bifiltration on the \textit{incremental Delaunay complex} $\incr(\gamma)$, which solves the non-monotonicity issue of the Delaunay complex, and is topologically equivalent to the sublevel offset bifiltration.

\paragraph{Representations}
Applying the homology functor on a bifiltration yields its associated multiparameter persistence module.
Unlike the single-parameter case, the multiparameter framework lacks a complete, discrete invariant analogous to the persistence barcode~\cite{carlsson2007theory}. This is because multiparameter modules cannot generally be decomposed into a direct sum of interval modules, except in some very specific cases~\cite{botnan2022rectangle,cochoy2020decomposition}.
For instance, the \textit{fibered barcode}~\cite{lesnick2015interactive} of a multiparameter persistence module $M$, which consists of the collection of (single-parameter) persistence barcodes of 1-dimensional affine slices of $M$, does not fully describe $M$.
Several \textit{candidate decomposition} methods \cite{asashiba2023approximation,loiseaux2025multi} have been proposed to alleviate this by approximating $M$ as a direct sum of interval modules.
To incorporate multiparameter PH indicators into machine learning pipelines, various vectorizations have been proposed. These include the multiparameter persistence kernel (MPK)~\cite{corbet2019kernel} and landscapes (MPL)~\cite{vipond2020multiparameter} that are both defined on the fibered barcode. Multiparameter persistence images (MPI)~\cite{carriere2020multiparameter} leverage vineyards to enhance discrimination between modules that share the same fibered barcode. The stable candidate decomposition representation (SCDR)~\cite{loiseaux2023framework} relies on candidate decomposition and comes with stability guarantees.
Finally, graphcodes~\cite{kerber2024graphcode} enable vectorization-free learning from multiparameter persistent homology.

\paragraph{Contributions}
Current approaches for computing 2-parameter persistent homology prove too computationally intensive for practical use. In this work, we tackle this challenge by introducing the \textit{sublevel Flood bifiltration}, along with its theoretical foundations and practical algorithm.
This construction is significantly smaller than existing bifiltrations and leads to orders-of-magnitude speedups for downstream operations (e.g., module candidate decomposition and vectorization).
More precisely, our contributions are as follows:
\begin{itemize}
  \item we introduce the \textit{sublevel Flood bifiltration}, establish its stability properties, and empirically assess its approximation of the sublevel offset bifiltration;
  \item we propose an efficient, GPU-friendly computational approach for this bifiltration (our source code is available at \url{https://github.com/MClemot/SLFlood} and a Python package is available on PyPI at \url{https://pypi.org/project/sublevel-flood/});
  \item we demonstrate, through classification experiments on both synthetic and real-world time series datasets, that our method matches the performance of existing sublevel-based bifiltrations while being far more compact.
\end{itemize}

\section{Background}

We consider a point set $X$ in $\bbr^d$, with $d$ small enough (typically, $d\leq10$) to allow for the computation of the Delaunay complex.

We quickly recall below the foundations of (single-parameter) persistent homology and refer the reader to \cite{edelsbrunner2010computational, chazal2021introduction} for further details.
A \textit{filtration} is a nested sequence of simplicial complexes $\calk=(\calk_r)_{r\geq0}$% that is increasing with respect to the inclusion
, i.e., such that for any $r\leq r'$, $\calk_r\subseteq\calk_{r'}$.
The $\hdim$-th homology group $\homol{\hdim}(\calk_r)$ is a vector space representing the $\hdim$-dimensional topological features of $\calk_r$ (i.e., connected components for $\hdim=0$, loops for $\hdim=1$, cavities for $\hdim=2$).
The core idea of persistent homology is to apply the homology functor to this sequence of simplicial complexes: as $r$ increases, the topological features emerge and vanish.
This information is summarized as the multiset $\dgm{\hdim}(\calk)=\{(b_i,d_i)\mid i\in\llbracket1,m\rrbracket\}$ where $b_i$ and $d_i$ are the filtration values at which a $\hdim$-dimensional topological feature is born and dies, respectively.
Algebraically, this corresponds to the persistence module $\bbk_\hdim=(\homol{\hdim}(\calk_r))_{r\geq0}$ (with linear maps induced by simplicial inclusions) admitting a decomposition into a direct sum of interval modules:
$\bbk_\hdim=\bigoplus_{i=1}^m\bbi[b_i,d_i]$~\cite{chazal2016structure}.

A notable filtration is the \textit{offset filtration} $\offsetf{r}(X) = \bigcup_{x\in X}B_r(x)$. Due to its non-combinatorial nature, one usually rather considers its nerve, known as the \cech\ filtration $\cechf{r}(X) = \left\{\sigma\subset X\mid\bigcap_{x\in\sigma}B_r(x)\neq\varnothing\right\}=\nrv\left(\left(B_r(x)\right)_{x\in X}\right)$. In low-dimensional Euclidean spaces, it is computationally more efficient to use the alpha (or Delaunay) filtration defined as $\alphaf{r}(X) = \nrv\left(\left(B_r(x)\cap\vor(x,X)\right)_{x\in X}\right)$ which is homotopy equivalent to $\offsetf{r}(X)$.

\subsection{The Flood filtration}

The Flood complex was introduced in~\cite{graf2025floodcomplex} to improve the scalability of PH when the size of $X$ increases. It relies on a set of landmarks $L$ that is often---but not necessarily---a subset of $X$, e.g., via furthest point sampling. At scale $r$, it contains all simplices in the Delaunay complex $\del(L)$ that are covered by the union of balls of radius $r$, centered at the points of $X$:
\begin{equation}
  \label{eq:flood_complex}
  \flood_r(X,L)=\left\{\sigma\in\del(L)\mid\conv(\sigma)\subset\bigcup\limits_{x\in X}\ball_r(x)\right\}
\end{equation}
where $\conv(\sigma)$ is the convex hull of $\sigma$.
The persistence diagrams of the Flood filtration are bottleneck-stable with respect to $X$, i.e., for any $X,Y,L$ and any $k\in\bbn$:
\begin{equation}
  \label{eq:flood_stability}
  \bottle\left(\dgm{\hdim}(\flood(X,L)), \dgm{\hdim}(\flood(Y,L))\right)\leq\hausdo(X,Y)
\end{equation}
where $\bottle$ is the bottleneck distance between persistence diagrams, and $\hausdo$ is the Hausdorff distance.
Besides, in the specific case where $L,X\subset\bbr^2$, it approximates the topology of the offset filtration:% meaning that for any $\hdim\in\bbn$:
\begin{equation}
  \label{eq:flood_approx}
  \forall\hdim\in\bbn, \bottle\left(\dgm{\hdim}(\flood(X,L)), \dgm{\hdim}(\alphaf{}(X))\right)\leq2\hausdo(X,L).
\end{equation}
This guarantees that the approximation of the topology of $\alphaf{}(X)$ by the Flood filtration is at most twice the error induced by a subsampling strategy (i.e., computing the PH of $\alphaf{}(L)$). In practice, however, the approximation turns out to be significantly better than this theoretical bound. This improvement arises because the filtration accounts for the entire $X$ (through the consideration of the union of balls centered at $X$), despite being defined only on $\del(L)$.

\subsection{2-parameter persistent homology}

\paragraph{Bifiltrations}
A \textit{bifiltration} is a simplicial complex $\calk$ and a collection of subcomplexes $(\calk_{\alpha})_{\alpha\in\bbr^2}$ that is increasing in the sense that $\calk_{\alpha}\subseteq\calk_{\beta}$ whenever $\alpha\preceq\beta$ (which means $\alpha_1\leq\beta_1$ and $\alpha_2\leq\beta_2$). The \textit{support} of a simplex $\sigma\in\calk$ is $\supp(\sigma)=\{\alpha\in\bbr^2\mid\sigma\in\calk_\alpha\}$.
This general definition can be adapted to our specific context, where, informally, each simplex is \textit{inserted} into $\calk$ along a \textit{staircase} (i.e., each support is a finite union of upper right quadrants).
More formally, each simplex has a finite,
\begin{wrapfigure}{r}{0.3\linewidth}
  \vspace{-1\baselineskip}
  \hfill
  \begin{overpic}[width=.97\linewidth]{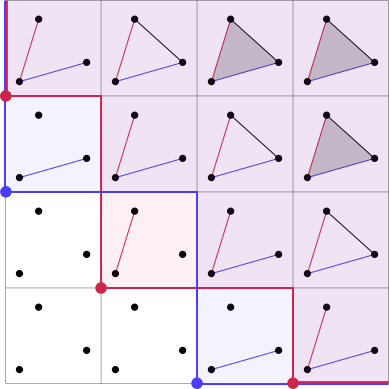}
    \put(-1,-6){\scriptsize0}
    \put(24,-6){\scriptsize1}
    \put(49,-6){\scriptsize2}
    \put(74,-6){\scriptsize3}
    \put(-6,-1){\scriptsize0}
    \put(-6,24){\scriptsize1}
    \put(-6,49){\scriptsize2}
    \put(-6,74){\scriptsize3}
  \end{overpic}
  % \caption{A 3-critical bifiltration.}
\end{wrapfigure}
minimal set of \textit{bigrades}
$\bigrades(\sigma)\subset\bbr^2$ such that $\sigma\in\calk_\alpha$ if and only if there exists $\beta\in\bigrades(\sigma)$ such that $\beta\preceq\alpha$.
This minimal set of bigrades $\phi(\sigma)$ corresponds to the Pareto front%
\footnote{The Pareto front of $S\subset\bbr^2$ is defined as $\pareto(S)=\{\alpha\in S\mid\nexists\beta\in S,\beta\preceq\alpha\text{ and }\alpha\neq\beta\}$.}
of $\supp(\sigma)$
and represents the multiple appearances of $\sigma$ within $\calk$ along the staircase.
This is analogous to the simplex \textit{insertion} in single-parameter persistence.
The bifiltration is said to be \textit{$k$-critical} if every $\sigma\in\calk$ has a minimal set of bigrades of size $|\phi(\sigma)|\leq k$.
For instance, the sublevel Rips and Delaunay bifiltrations are always 1-critical while the degree-Rips bifiltration is multi-critical.
The inlined figure illustrates a 3-critical example, where the support of the red (respectively blue) edge of $\calk$ is shaded in red (resp. blue) with $3$ (resp. $2$) bigrades marked as red (resp. blue) dots at the corners of the support.

\paragraph{Multiparameter persistence modules}

Applying the $\hdim$-th homology functor $\homol{\hdim}$ on a bifiltration yields the associated 2-parameter persistence module $\bbk=\left(\homol{\hdim}(\calk_\alpha)\right)_{\alpha\in\bbr^2}$.
It can be viewed as a collection of vector spaces equipped with linear maps $\homol{\hdim}(\calk_\alpha)\to\homol{\hdim}(\calk_\beta)$ for each $\alpha\preceq\beta$ induced by the simplicial inclusions $\calk_\alpha\subset\calk_\beta$.
The interleaving distance $\interl$ between (isomorphism classes of) 2-parameter
persistence modules generalizes from the single-parameter case~\cite{lesnick2015theory}.
However, its computation is \np-complete~\cite{bjerkevik2020computing}.
A more practical alternative is the matching distance $\matchd$~\cite{cerri2013betti}, a distance between fibered barcodes, providing a lower bound for the interleaving distance \changed{(see \cref{sec_additional_background} for details)}.
%Both exact~\cite{kerber2019exact} and approximate~\cite{kerber2020efficient} algorithms have been proposed to compute it.

\paragraph{Module preprocessing}

A common preprocessing step is to compute a \textit{minimal presentation}~\cite{lesnick2022computing,kerber2021fast}, which reduces the size of the input, by providing the smallest possible representation of a multiparameter persistence module. This often accelerates subsequent algorithms, such as candidate decomposition.
However, these minimal presentation algorithms require \textit{free}, i.e., \textit{1-critical}, bifiltrations.
To handle multi-critical bifiltrations, \cite{chacholski2017combinatorial} proposed an algorithm to compute a \textit{free implicit representation}. More recently,~\cite{bauer2025fast} %proposed to work on the chain complex level by 
proposed two \textit{one-criticalization} algorithms that both compute a \textit{free resolution} of the input, i.e., a free chain complex that is isomorphic in homology to the input.

See \cref{sec_additional_background} for further background on multiparameter persistent homology.

\section{The sublevel Flood bifiltration}
\label{sec_theory}

This section presents our novel bifiltration for the efficient computation of 2-parameter persistent homology. Proofs are provided in \cref{sec_proofs}.

\subsection{Definition}

In order to extend the Flood filtration to the 2-parameter framework, we define the \textit{sublevel Flood bifiltration} for $X,L\subset\bbr^d$ and a function $\gamma:X\to\bbr$, writing $X_s=\gamma^{-1}((-\infty,s])$, as follows:
\begin{equation}
  \label{eq:function_flood}
  \flood_{r,s}(X,L,\gamma)=\flood_{r}\left(X_s,L\right)=\left\{\sigma\in\del(L)\mid\conv(\sigma)\subset\bigcup\limits_{x\in X_s}\ball_r(x)\right\}.
\end{equation}
This bifiltration is multi-critical but uses a small complex, in contrast to sublevel Rips and Delaunay bifiltrations which are 1-critical but require a large complex to describe the data.
%This bifiltration is multi-critical: it can be considered opposite to sublevel Rips and Delaunay bifiltrations in the compromise between small complex with high criticality and large complex with 1-criticality to describe the data.
\cref{fig:sublevel_flood_illustration} illustrates this construction on a planar point set.

We suppose in the following that $X=(x_1,\ldots,x_n)$ is sorted in increasing order of $\gamma$.
We also write for convenience $\gamma_i=\gamma(x_i)$ and $\Xincr{i}=X_{\gamma_i}=(x_1,\ldots,x_i)$.
The support of a simplex $\sigma\in\del(L)$ is then described by the following set of bigrades: 
\begin{equation}
  %\bigrades(\sigma)=\left\{\left(\max_{p\in\conv\sigma}d\left(p,X_i\right), \gamma_i\right),1\leq i\leq n\right\}
  \bigrades(\sigma)=\left\{\left(\fflood(\sigma,\Xincr{i}),\gamma_i\right),1\leq i\leq n\right\}
  \text{ with }
  \fflood(\sigma,X)=\max_{p\in\conv\sigma}\dist(p,X)=\max_{p\in\conv\sigma}\min_{x\in X}\dist(p,x)
  \label{eq:true_bigrades}
\end{equation}
where $\dist$ is the Euclidean distance. Taking the Pareto front of this set gives the minimal set of bigrades of $\sigma$. 
In the worst case, its size can reach $|X|$, which means that $\flood(X,L,\gamma)$ is, in general, $|X|$-critical.
In practical cases, however, the criticality turns out to be much smaller.

In the following, we write $\floodmodule(X,L,\gamma)$ for the 2-parameter persistence module obtained by applying the homology functor to the Flood bifiltration: $\floodmodule(X,L,\gamma)=\left(\homol{*}(\flood_{r,s}(X,L,\gamma))\right)_{r,s}$ with the maps induced by the simplicial inclusions.

\begin{figure}
  \centering
  \rotatebox{90}{\hspace{2.7cm}codensity $\longrightarrow$}
  \includegraphics[width=.97\linewidth]{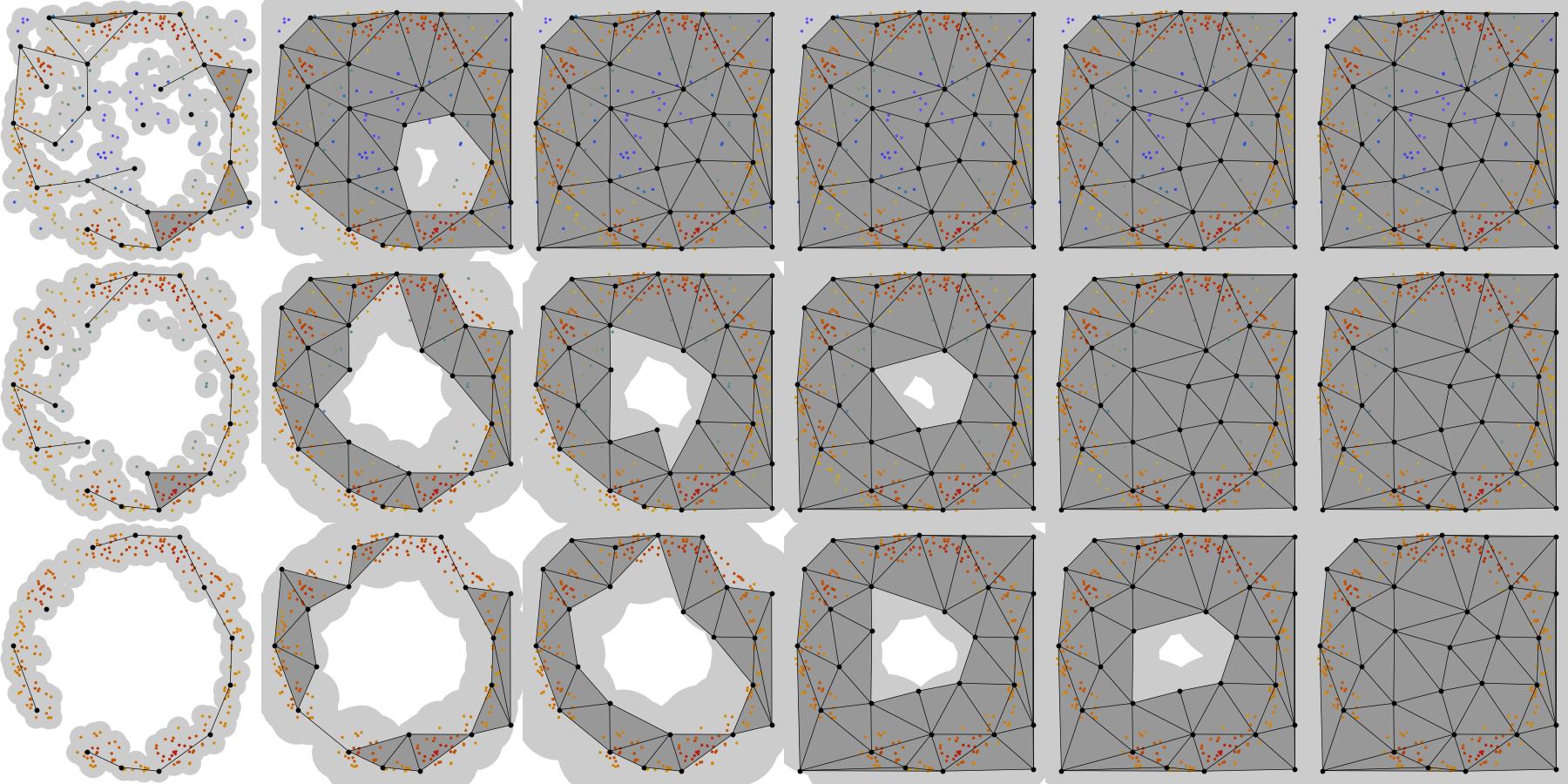}
  radius $\longrightarrow$
  \caption{$\flood_{r,s}(X,L,\gamma)$ is represented with black vertices, black edges and dark gray triangles for $L\subset X\subset\bbr^2$ ($|X|=400$, $|L|=30$), and $\gamma:X\to\bbr$ a codensity function obtained by a Gaussian kernel density estimator. The sublevel offset bifiltration $\offsetf{r,s}(X,\gamma)=\offsetf{r}(X_s)$ is shown in light gray.}
  \label{fig:sublevel_flood_illustration}
\end{figure}

\subsection{Properties}
\label{sec:properties}
A desirable property for a topological indicator is stability under input perturbation.
We quantify this stability by providing an upper bound of the interleaving distance between the sublevel Flood persistence module of $X$ and that of its perturbation $Y$, with a fixed set of landmarks.
This, in turn, implies the same bounds for the matching distance between the associated fibered barcodes.

\begin{theorem}[Stability w.r.t. $X$ and $\gamma$]
  \label{th:stability_X_gamma}
  Let $X,Y\subset\bbr^d$ be two point sets. Suppose that both $\gamma_X$ and $\gamma_Y:\bbr^d\to\bbr$ are $\lipconst$-Lipschitz.
  With $L$ fixed, the interleaving distance between both sublevel Flood modules is upper bounded by:
  \begin{equation}
    \interl\left(\floodmodule(X,L,\gamma_X), \floodmodule(Y,L,\gamma_{Y})\right)\leq\max\left(\hausdo(X,Y),\lipconst\hausdo(X,Y)+\lVert\gamma_X-\gamma_Y\rVert_\infty\right).
  \end{equation}
\end{theorem}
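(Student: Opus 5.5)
The plan is to build an explicit interleaving at the level of bifiltrations, which then passes to homology. Set $\varepsilon=\hausdo(X,Y)$ and $\delta=\lipconst\varepsilon+\lVert\gamma_X-\gamma_Y\rVert_\infty$, and take the shift vector $v=(\varepsilon,\delta)$. We show that for every $(r,s)$ there are inclusions
\[
\flood_{r,s}(X,L,\gamma_X)\subseteq\flood_{r+\varepsilon,s+\delta}(Y,L,\gamma_Y)
\]
and symmetrically with $X$ and $Y$ swapped. All these complexes are subcomplexes of the same $\del(L)$, so the inclusions automatically commute with each other and with the internal maps of the bifiltrations. Applying $\homol{*}$ therefore gives a $v$-interleaving of $\floodmodule(X,L,\gamma_X)$ and $\floodmodule(Y,L,\gamma_Y)$. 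Since $\interl$ is defined with respect to a uniform shift $(\eta,\eta)$, and the complexes are monotone in both parameters, a $(\varepsilon,\delta)$-interleaving is also an $(\eta,\eta)$-interleaving for $\eta=\max(\varepsilon,\delta)$. This is exactly the stated bound.

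The key step is a covering lemma:
\[
\bigcup_{x\in (X)_s}\ball_r(x)\subseteq\bigcup_{y\in (Y)_{s+\delta}}\ball_{r+\varepsilon}(y),
\]
where $(X)_s=\gamma_X^{-1}((-\infty,s])\cap X$ and $(Y)_{s+\delta}$ is defined the same way with $\gamma_Y$. To prove it, take $p\in\ball_r(x)$ with $x\in X$ and $\gamma_X(x)\le s$. By the definition of Hausdorff distance, pick $y\in Y$ with $\dist(x,y)\le\varepsilon$; if the infimum is not attained, use $\varepsilon+\eta'$ and let $\eta'\to0$, which is legitimate because the sets are finite. Then $\dist(p,y)\le r+\varepsilon$. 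For the second parameter,
\[
\gamma_Y(y)\le\gamma_X(y)+\lVert\gamma_X-\gamma_Y\rVert_\infty\le\gamma_X(x)+\lipconst\varepsilon+\lVert\gamma_X-\gamma_Y\rVert_\infty\le s+\delta,
\]
using that $\gamma_X$ is $\lipconst$-Lipschitz on $\bbr^d$. So $y\in(Y)_{s+\delta}$.

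Now let $\sigma\in\flood_{r,s}(X,L,\gamma_X)$, i.e.\ $\conv(\sigma)$ lies in the left-hand union. By the lemma it lies in the right-hand union, so $\sigma\in\flood_{r+\varepsilon,s+\delta}(Y,L,\gamma_Y)$. For the reverse inclusion, swap the roles of $X,Y$ and $\gamma_X,\gamma_Y$. This step uses the Lipschitz property of $\gamma_Y$, which is why both functions are assumed to be $\lipconst$-Lipschitz on all of $\bbr^d$ rather than only on their own point sets.

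I expect the main obstacle to be bookkeeping rather than substance. One point to handle is that $\gamma$ must be compared at a point of $Y$ while only being controlled on $X$; the global Lipschitz hypothesis together with the sup-norm of $\gamma_X-\gamma_Y$ over $\bbr^d$ takes care of this. The other is matching the anisotropic shift $(\varepsilon,\delta)$ to whatever convention the paper uses for $\interl$ in \cref{sec_additional_background}. If that convention uses diagonal shifts, we enlarge both components to their maximum, as above. Functoriality of homology then turns the commuting inclusion diagram into the required interleaving morphisms.
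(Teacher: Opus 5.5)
Your proposal is correct and follows essentially the same route as the paper. Both arguments are a pointwise covering argument yielding the simplicial inclusions $\flood_{r,s}(X,L,\gamma_X)\subseteq\flood_{r+\alpha,s+\alpha}(Y,L,\gamma_Y)$ and the symmetric one, with $\alpha=\max\left(\hausdo(X,Y),\lipconst\hausdo(X,Y)+\lVert\gamma_X-\gamma_Y\rVert_\infty\right)$; these inclusions then induce the interleaving in homology. The only cosmetic difference is that for the $X\to Y$ inclusion you use the Lipschitz property of $\gamma_X$ evaluated at $y$, whereas the paper uses that of $\gamma_Y$ evaluated at $x$; either works since both functions are assumed $\lipconst$-Lipschitz. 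Also, for finite sets the Hausdorff infimum is attained, so your limiting aside is unnecessary.
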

% \begin{proof}
%   \remark{will be moved in appendix}
%   Let $\sigma\in\flood_{r,s}(X,L,\gamma_X)$ and $p\in\conv\sigma$. There is a $x\in X$ such that $\dist(p,x)\leq r$ and $\gamma_X(x)\leq s$.
%   Then, there exists $y\in Y$ such that $\dist(x,y)\leq\hausdo(X,Y)$. Therefore $\dist(p,y)\leq\dist(p,x)+\dist(x,y)\leq r+\hausdo(X,Y)$.
%   Besides:
%   \begin{align*}
%     \gamma_Y(y)-s&\leq\gamma_Y(y)-\gamma_X(x)\\
%     &\leq|\gamma_Y(y)-\gamma_X(x)|\\
%     &\leq|\gamma_Y(y)-\gamma_Y(x)|+|\gamma_Y(x)-\gamma_X(x)|\\
%     &\leq\lipconst\hausdo(X,Y)+\lVert\gamma_X-\gamma_Y\rVert_\infty
%   \end{align*}
%   This means that $\sigma\in\flood_{r+\hausdo(X,Y),s+\lipconst\hausdo(X,Y)+\lVert\gamma_X-\gamma_Y\rVert_\infty}(Y,L,\gamma_Y)$. By writing $\alpha=\max\left(\hausdo(X,Y),\lipconst\hausdo(X,Y)+\lVert\gamma_X-\gamma_Y\rVert_\infty\right)$, we have:
%   \begin{align*}
%     \flood_{r,s}(X,L,\gamma_X)&\subseteq\flood_{r+\alpha,s+\alpha}(Y,L,\gamma_Y)\\
%     \flood_{r,s}(Y,L,\gamma_Y)&\subseteq\flood_{r+\alpha,s+\alpha}(X,L,\gamma_X)
%   \end{align*}
%   These simplicial inclusions provide an $\alpha$-interleaving between the two modules $\floodmodule(X,L,\gamma_X)$ and $\floodmodule(Y,L,\gamma_{Y})$, hence the result.
% \end{proof}

As mentioned earlier, $\gamma$ is often a codensity estimation obtained by KDE or DTM.
In this case, we can refine the stability result when each point of $X$ is displaced.
\begin{theorem}[Stability w.r.t. $X$]
  \label{th:stability_X}
  %Let $X\subset\bbr^d$ a point set and its perturbed version $Y=X+E$ where each $E_i\in\bbr^{d}$ is such that $\lVert E_i\rVert_2\leq\varepsilon$. We consider the case where $L$ is fixed and $\gamma_X$ and $\gamma_Y$ are density estimates obtained by either KDE or DTM:
  Let $X\subset\bbr^d$ be a point set and $Y$ its $\varepsilon$-perturbation such that $\lVert x_i-y_i\rVert_2\leq\varepsilon$ for all $i$.
  % We consider the case where $L$ is fixed and $\gamma_X$ and $\gamma_Y$ are density estimates obtained by either KDE or DTM:
  % \begin{itemize}
  %   \item when $\gamma$ is obtained by KDE with a $\lipconst_K$-Lipschitz kernel $K$ and bandwidth $h$:
  %   \[\interl\left(\floodmodule(X,L,\gamma_X), \floodmodule(Y,L,\gamma_{Y})\right)\leq\varepsilon\max\left(1,\frac{2\ell_K}{h^{d+1}}\right);\]
  %   \item when $\gamma$ is obtained by DTM with parameter $m\in(0,1]$:
  %   \[\interl\left(\floodmodule(X,L,\gamma_X), \floodmodule(Y,L,\gamma_{Y})\right)\leq\varepsilon\left(1+\frac{1}{\sqrt{m}}\right).\] 
  % \end{itemize}
  We consider the case where $L$ is fixed.
  If $\gamma_X$ and $\gamma_Y$ are obtained by a KDE with an $\lipconst_K$-Lipschitz kernel $K$ and bandwidth $h$, then
    \[\interl\left(\floodmodule(X,L,\gamma_X), \floodmodule(Y,L,\gamma_{Y})\right)\leq\varepsilon\max\left(1,\frac{2\ell_K}{h^{d+1}}\right).\]
  If $\gamma_X$ and $\gamma_Y$ are obtained by DTM with parameter $m\in(0,1]$:
    \[\interl\left(\floodmodule(X,L,\gamma_X), \floodmodule(Y,L,\gamma_{Y})\right)\leq\varepsilon\left(1+\frac{1}{\sqrt{m}}\right).\] 
\end{theorem}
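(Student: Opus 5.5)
We follow the interleaving argument behind \cref{th:stability_X_gamma}, but we exploit the pointwise correspondence $x_i\leftrightarrow y_i$ rather than a Hausdorff matching. The goal is to show that for all $(r,s)$,
\[
\flood_{r,s}(X,L,\gamma_X)\subseteq\flood_{r+\varepsilon,\,s+\delta}(Y,L,\gamma_Y)
\]
together with the symmetric inclusion. Here $\delta$ is a uniform bound on the change of codensity at matched points, $\delta\geq\max_i|\gamma_X(x_i)-\gamma_Y(y_i)|$. Once these inclusions hold, they give an $\eta$-interleaving with $\eta=\max(\varepsilon,\delta)$, since every simplex lives on the fixed complex $\del(L)$ and the inclusions commute with the structure maps.

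\textbf{Step 1 (inclusion).} Let $\sigma\in\flood_{r,s}(X,L,\gamma_X)$ and $p\in\conv\sigma$. Then some $x_i$ satisfies $\dist(p,x_i)\leq r$ and $\gamma_X(x_i)\leq s$. Its partner satisfies $\dist(p,y_i)\leq r+\varepsilon$ and $\gamma_Y(y_i)\leq s+|\gamma_Y(y_i)-\gamma_X(x_i)|\leq s+\delta$, which gives the inclusion. So the proof reduces to bounding $\delta$ for each estimator. We need only the values at data points, so the Lipschitz-in-space argument of \cref{th:stability_X_gamma} is replaced by a joint perturbation estimate.

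\textbf{Step 2 (KDE).} Write $\gamma_X(z)=-\frac{1}{nh^d}\sum_j K\!\left(\frac{z-x_j}{h}\right)$; any sign or normalisation convention only flips the sign. Then
\[
|\gamma_X(x_i)-\gamma_Y(y_i)|\leq\frac{1}{nh^d}\sum_j\Big|K\!\Big(\tfrac{x_i-x_j}{h}\Big)-K\!\Big(\tfrac{y_i-y_j}{h}\Big)\Big|\leq\frac{1}{nh^d}\sum_j\frac{\ell_K}{h}\lVert(x_i-y_i)-(x_j-y_j)\rVert\leq\frac{2\ell_K\varepsilon}{h^{d+1}}.
\]
So $\delta=2\ell_K\varepsilon/h^{d+1}$. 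The factor $2$ comes from both the evaluation point and the sample point moving, and this yields the stated bound.

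\textbf{Step 3 (DTM).} Use the empirical DTM
\[
\dist_{X,m}(z)=\Big(\frac1k\sum_{x\in N_k(z)}\dist(z,x)^2\Big)^{1/2},\qquad k=\lceil mn\rceil,
\]
where $N_k(z)$ denotes the $k$ nearest points of $X$ to $z$. We rely on the two standard facts from \cite{chazal2011geometric}: the DTM is $1$-Lipschitz in $z$, and it is stable under Wasserstein perturbation of the measure, $\lVert\dist_{\mu,m}-\dist_{\nu,m}\rVert_\infty\leq m^{-1/2}W_2(\mu,\nu)$. The empirical measures of $X$ and $Y$ satisfy $W_2\leq\varepsilon$ through the coupling $x_i\mapsto y_i$. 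Combining both facts,
\[
|\dist_{X,m}(x_i)-\dist_{Y,m}(y_i)|\leq|\dist_{X,m}(x_i)-\dist_{X,m}(y_i)|+|\dist_{X,m}(y_i)-\dist_{Y,m}(y_i)|\leq\varepsilon+\varepsilon/\sqrt m.
\]
Hence $\delta=\varepsilon(1+1/\sqrt m)$, and the max with $\varepsilon$ equals $\delta$. This matches the statement.

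\textbf{Main obstacle.} The hardest step is the DTM estimate: invoking the Wasserstein stability of the DTM with the right constant for the empirical definition, including the normalisation when $mn$ is not an integer. If needed, we would prove it directly: view $\dist_{X,m}(z)$ as $m^{-1/2}$ times the $L^2$ norm of distances over an optimal sub-measure of mass $m$, and transport that sub-measure along the coupling. The triangle inequality in $L^2$ then gives the $\varepsilon/\sqrt m$ term. The KDE step is routine once we check that the same normalisation as in the paper is used.
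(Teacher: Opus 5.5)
Your proof is correct and gives exactly the stated constants, but it is organised differently from the paper's. The paper does not redo the interleaving. It notes $\hausdo(X,Y)\leq\varepsilon$ and feeds a global Lipschitz constant and a sup-norm bound for the estimator into \cref{th:stability_X_gamma}. For KDE these are $\lipconst=\lipconst_K/h^{d+1}$ and $\lVert\gamma_X-\gamma_Y\rVert_\infty\leq\varepsilon\lipconst_K/h^{d+1}$, so the factor $2$ arises as $\lipconst\,\hausdo(X,Y)+\lVert\gamma_X-\gamma_Y\rVert_\infty$. For DTM they are $\lipconst=1$ and $\lVert\gamma_X-\gamma_Y\rVert_\infty\leq\wasser{2}(X,Y)/\sqrt{m}\leq\varepsilon/\sqrt{m}$.

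You instead rerun the inclusion argument with the partner $y_i$ in place of a Hausdorff-nearest point, so you only need $\max_i|\gamma_X(x_i)-\gamma_Y(y_i)|$. Your DTM estimate is then the same Lipschitz-plus-Wasserstein triangle inequality that appears inside the proof of \cref{th:stability_X_gamma}. For KDE you get the factor $2$ in a single computation.

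The paper's route is more modular: the result becomes a one-line corollary for any estimator with known Lipschitz and sup-norm stability. Yours uses $\gamma$ only at the data points and is potentially sharper. Your intermediate KDE inequality shows that only the relative displacements $\lVert(x_i-y_i)-(x_j-y_j)\rVert$ matter; for instance, the codensity term vanishes for a rigid translation. The Lipschitz/sup-norm decomposition does not capture this.

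The obstacle you flag is harmless. The $\lceil mn\rceil$-nearest-neighbour formula is exactly the DTM of the empirical measure at mass $m'=\lceil mn\rceil/n\geq m$. That DTM is still $1$-Lipschitz and has stability constant $1/\sqrt{m'}\leq1/\sqrt{m}$.
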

Thus, if the input $X$ is slightly perturbed while $L$ remains fixed, the associated persistence module changes only slightly (with respect to the interleaving distance), as do its vectorizations, whether they are based on the fibered barcode or finer representations like SCDR.

\subsection{Computational approach}
\label{sec:computational_approach}

\paragraph{Landmark selection}
As in~\cite{graf2025floodcomplex}, we want to minimize $\hausdo(X,L)$ to ensure a well-distributed set of landmarks.
However, given a fixed number of landmarks $|L|$, finding the optimal $L\subset X$ that minimizes $\hausdo(X,L)$ amounts to solving a metric $|L|$-center problem which is \np-hard~\cite{hochbaum1985best}.
Instead, we adopt below the same greedy strategy as in~\cite{graf2025floodcomplex}, based on furthest point sampling.
\changed{
  See \cref{sec:landmarks_sampling,sec:landmarks_number} for discussions on the sampling strategy and the number of landmarks.
}

\paragraph{Masking strategy}
A key aspect of the computational efficiency of the Flood filtration is its local nature:
the filtration value $\fflood(\sigma)$ of a simplex $\sigma\in\del(L)$ is not affected by points that are sufficiently far from $\sigma$.
More precisely, if $\ball_r(c)$ is an enclosing ball of $\sigma$, then $\fflood(\sigma)=\max_{p\in\conv\sigma}\dist(p,X\cap\ball_{\sqrt{2}r}(c))$. This allows for a masking strategy in which each simplex only considers the points of $X$ that are in the $\sqrt{2}$-dilation of its minimal enclosing ball (meb).
Unfortunately, this approach does not directly extend to the sublevel Flood bifiltration, since a point with a low $\gamma$ value may influence the set of bigrades of arbitrarily far simplices.
To keep the number of points per simplex manageable, we resort to the following approximation.
We first compute the minimal set of bigrades $\bigrades(\{l\})$ of each vertex $l\in L$, and write $X^{\{l\}}\subseteq X$ for the points that contribute to it.\footnote{i.e., $X^{\{l\}}=\left\{x\in X\mid\nexists y\in X\text{ s.t. } y\neq x \text{ and } \gamma(y)\leq\gamma(x) \text{ and } \dist(y,l)\leq\dist(x,l)\right\}$.}
Now, let $\sigma\in\del(L)$. We define the set of points that are considered for determining its bigrades as
$X^\sigma=\bigcup_{\tau\in\cell(\sigma)}\meb_{\sqrt{2}}(\tau)\cup\bigcup_{l\in\vstar(\sigma)}X^{\{l\}}$,
where $\cell(\sigma)$ contains the maximal cofaces of $\sigma$,
$\meb_{\sqrt{2}}(\tau)$ contains the points of $X$ inside the $\sqrt{2}$-dilation of the meb of $\tau$,
and $\vstar(\sigma)$ contains the vertices appearing in the cofaces of $\sigma$.\footnote{i.e., $\vstar(\sigma)=\{l\in L\mid\exists \tau\text{ s.t. }\sigma\subset\tau\text{ and } l\in\tau\}$.}
Informally, the left union includes points that are \textit{near} $\sigma$, which impact the set of bigrades $\bigrades(\sigma)$ for low diameters and large $\gamma$ values. The right union, on the other hand, contains \textit{farther points} that typically impact $\bigrades(\sigma)$ for large diameters and low $\gamma$ values.
We write $\cflood_{r,s}(X,L,\gamma)=\{\sigma\in\del(L)\mid\conv(\sigma)\subset\bigcup\limits_{x\in X^\sigma_s}\ball_r(x)\}$, where $X^\sigma_s=\{x\in X^\sigma\mid\gamma(x)\leq s\}$.

\begin{theorem}
  \label{th:computation}
  $\cflood(X,L,\gamma)$ is a bifiltration and, in addition, noting $\Delta(L)=\max\limits_{\sigma\in\del(L)}\delta(\sigma)$ where $\delta$ is the diameter, we have
  $\interl(\floodmodule(X,L,\gamma), \widetilde{\floodmodule}(X,L,\gamma))\leq2\Delta(L)$.
\end{theorem}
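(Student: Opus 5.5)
The plan is to verify the two defining properties of a bifiltration directly from the definition of $X^\sigma$. Then I will build an interleaving from two explicit simplicial inclusions, one of which is trivial. The whole argument should reduce to elementary facts about cofaces and about the Pareto sets $X^{\{l\}}$.

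\emph{Step 1: $\cflood$ is a bifiltration.} Monotonicity in $(r,s)$ is immediate: for $r\le r'$ and $s\le s'$ we have $X^\sigma_s\subseteq X^\sigma_{s'}$ and $\ball_r(x)\subseteq\ball_{r'}(x)$, so the covering condition is preserved. The real point is closure under faces. Let $\tau\subseteq\sigma$ with $\sigma\in\cflood_{r,s}$. Every coface of $\sigma$ is a coface of $\tau$, so every maximal coface of $\sigma$ is a maximal coface of $\tau$, i.e.\ $\cell(\sigma)\subseteq\cell(\tau)$. For the same reason $\vstar(\sigma)\subseteq\vstar(\tau)$. Hence $X^\sigma\subseteq X^\tau$ and therefore $X^\sigma_s\subseteq X^\tau_s$. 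Since $\conv\tau\subseteq\conv\sigma$, the set $\conv\tau$ is covered by the balls $\ball_r(x)$ with $x\in X^\sigma_s\subseteq X^\tau_s$, which gives $\tau\in\cflood_{r,s}$.

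\emph{Step 2: the easy inclusion.} Since $X^\sigma_s\subseteq X_s$, we get $\cflood_{r,s}(X,L,\gamma)\subseteq\flood_{r,s}(X,L,\gamma)$ for all $(r,s)$.

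\emph{Step 3: the reverse inclusion with a shift in $r$ only.} This is the main step. I will show $\flood_{r,s}\subseteq\cflood_{r+\Delta(L),s}$, which is stronger than needed. Let $\sigma\in\flood_{r,s}$ and pick any vertex $l\in\sigma$. Then $l\in\vstar(\sigma)$, and since $l\in\conv\sigma$ there is some $x\in X_s$ with $\dist(x,l)\le r$.

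Because $X$ is finite, the Pareto front of $\{(\dist(y,l),\gamma(y))\}_{y\in X}$ dominates every point. This front is essentially $X^{\{l\}}$, up to the tie-breaking in its definition, which I would handle by choosing a representative among points sharing the same pair of values. So there exists $y\in X^{\{l\}}\subseteq X^\sigma$ with $\gamma(y)\le\gamma(x)\le s$ and $\dist(y,l)\le\dist(x,l)\le r$. Thus $y\in X^\sigma_s$, and for every $p\in\conv\sigma$ we have
\[
\dist(p,y)\le\dist(p,l)+\dist(l,y)\le\delta(\sigma)+r\le\Delta(L)+r .
\]
Hence $\conv\sigma\subseteq\ball_{r+\Delta(L)}(y)$ and $\sigma\in\cflood_{r+\Delta(L),s}$. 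If $X_s=\varnothing$, then $\flood_{r,s}$ is empty and there is nothing to prove.

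\emph{Step 4: the interleaving.} The inclusions of Steps 2 and 3 commute with all structure maps, since everything is an inclusion of subcomplexes of $\del(L)$. They therefore give, after applying homology, a $(\Delta(L),0)$-interleaving. Since $(\Delta(L),0)\preceq(2\Delta(L),2\Delta(L))$, composing with the internal maps turns this into a $2\Delta(L)$-interleaving in the usual diagonal sense, which gives the stated bound.

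The only delicate point I expect is the Pareto-domination argument in Step 3, specifically the tie-breaking in the footnote definition of $X^{\{l\}}$. Read literally, it could exclude all points that share identical $(\dist,\gamma)$ values with another point. I would address this by noting that the minimal set of bigrades $\bigrades(\{l\})$ is still realized by some contributing point, which is the intended meaning of $X^{\{l\}}$.
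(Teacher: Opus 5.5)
Your proof is correct. Steps 1, 2 and 4 essentially match the paper's, but Step 3 takes a different and sharper route.

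The paper argues pointwise. For each $p\in\conv\sigma$ it picks $x\in X_s$ with $\dist(p,x)\le r$ and handles the case $x\in X^\sigma$ separately. Otherwise it takes points $x_l\in X^{\{l\}}$ with $\dist(x_l,l)\le\dist(x,l)\le\dist(x,p)+\dist(p,l)$. The distance $\dist(p,l)$ is therefore paid twice, giving $\dist(p,X^\sigma_s)\le r+2\min_{l\in\sigma}\dist(p,l)\le r+2\Delta(L)$.

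You use only the fact that a vertex $l$ is itself covered at scale $(r,s)$. This gives a single $y\in X^{\{l\}}$ with $\dist(l,y)\le r$, and that one ball then covers all of $\conv\sigma$ at radius $r+\delta(\sigma)$. This removes the case split and pays $\dist(p,l)$ only once. Applied to every vertex, it even gives the pointwise bound $r+\min_{l\in\sigma}\dist(p,l)$, which is never worse than the paper's. The result is the stronger inclusion $\flood_{r,s}\subseteq\cflood_{r+\Delta(L),s}$.

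Your Step 4 is therefore needlessly lossy. Since $(\Delta(L),0)\preceq(\Delta(L),\Delta(L))$, your inclusions already form a $\Delta(L)$-interleaving. So you have in fact proved $\interl(\floodmodule(X,L,\gamma),\widetilde{\floodmodule}(X,L,\gamma))\le\Delta(L)$, which of course implies the stated $2\Delta(L)$.

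Your caveat about ties in the footnote definition of $X^{\{l\}}$ is well taken, and it applies equally to the paper's proof. The paper invokes the same domination property to produce $x_l$ without addressing exact ties. Reading $X^{\{l\}}$ as containing a representative of each Pareto-minimal pair $(\dist(\cdot,l),\gamma(\cdot))$, or assuming general position, is what both arguments need. Both arguments also rely on $\sigma\subseteq\vstar(\sigma)$, so that $X^{\{l\}}\subseteq X^\sigma$ for every vertex $l$ of $\sigma$.
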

While this result suggests that a dense set of landmarks is needed to benefit from the properties of the Flood bifiltration with our approximate masking  strategy, this bound is overly pessimistic, because it does not account for the inclusion of all \textit{near} points in $X^\sigma$.
See \cref{fig:approximation,fig:stability} for a practical evaluation of its stability and its ability to approximate the sublevel offset bifiltration.

% To do so, each simplex $\sigma\in\del(L)$ considers two kinds of points in $X$:
% \begin{itemize}
%   \item \textit{near points}: the points within $X\cap\ball_{\sqrt{2}r}(c)$ where $\ball_{r}(c)$ is the meb of $\sigma$. Informally, these points typically impact the set of bigrades $\bigrades(\sigma)$ for low diameters and high $\gamma$ values;
%   \item \textit{far points}: the critical points of each of its vertices $l\in\sigma$, i.e., the points of $X$ that contribute to one of the sets of bigrades $\phi(\{l\})$. Conversely, these points typically impact $\bigrades(\sigma)$ for high diameters and low $\gamma$ values.
% \end{itemize}

\paragraph{Simplex discretization and computation}

As in the original Flood algorithm, we discretize each convex hull $\conv(\sigma)$ by a discrete point set $P_\sigma\subset\conv(\sigma)$, which is chosen on a grid obtained with evenly spaced barycentric coordinates.
Then, when computing the set of bigrades of $\sigma\in\del(L)$ (see \cref{eq:true_bigrades}),
$\max_{p\in\conv\sigma}\dist(p,X^\sigma_s)$
is replaced by 
$\max_{p\in P_\sigma}\dist(p,X^\sigma_s)$.
The error introduced by this approximation is controlled by the number of grid points on each simplex edge (see \cite{graf2025floodcomplex}, Theorem 5 and Lemma 6, for a detailed discussion).
Thus, with $X^\sigma$ still ordered by increasing value of $\gamma$, the set of bigrades $\phi(\sigma)$ is computed as the Pareto front of:
\[\left\{\left(\max_{p\in P_\sigma}\min_{x\in \Xincr{i}^\sigma}\dist(p,x),\gamma(X_i^\sigma)\right),1\leq i\leq|X^\sigma|\right\}.\]
Concretely, given the distance matrix $D_\sigma=\dist(P_\sigma,X^\sigma)$, one can compute $\phi(\sigma)$ by first applying a cumulative minimum operation along each row, and then taking the maximum on each column. We implemented this as batched operations on GPU tensors. \changed{See \cref{fig:sublevel_flood_computation} for an illustration.}

\section{Experiments}
\label{sec_experiments}

We implemented our algorithm in Python using PyTorch~\cite{pytorch} to compute bifiltration values with tensors on the GPU.
For comparisons, we used the multipers library~\cite{multipers} for the multiparameter module approximation~\cite{loiseaux2025multi}, vectorizations~\cite{loiseaux2023framework,vipond2020multiparameter}, and wrappers of multiparameter PH algorithms (sublevel Delaunay bifiltrations~\cite{alonso2024delaunay} and minimal presentations~\cite{kerber2021fast}).
Diameter-based and density-based persistence diagrams were computed and vectorized with the Gudhi library~\cite{maria2014gudhi}.
Finally, the machine learning pipelines were set up using the scikit-learn library~\cite{pedregosa2011scikit}.
In all our experiments, we sample 20 points on each simplex edge to define the barycentric grids $P_\sigma$ (see \cref{sec:computational_approach}; an edge has 20 samples, a triangle 210 and a tetrahedron 1540).
Timing results were obtained on a server with 12 CPU cores, 64 GB of RAM and an H100 GPU.
See \cref{sec_additionalExperiments} for additional experiments.

\subsection{Runtime}

In \cref{fig:timings}, we compare the running times for computing the bifiltration plus its MMA, with an optional intermediate minimal presentation (MP) step for the sublevel Delaunay and Rips bifiltrations.
For our sublevel Flood bifiltration, we test several numbers of landmarks $|L|\in\{100,300,500,1000\}$. 
We run these experiments for $d=2$ and $d=3$: in both cases, half of the points in $X\subset\bbr^d$ are distributed over the unit $(d-1)$-sphere with added Gaussian noise, while the other half are uniformly distributed inside the $d$-ball of radius $3/2$.
We show the running time of the bifiltration construction alone (without MMA) in \cref{fig:timings_additional}.
We also report in Appendix, \cref{fig:structure_sizes}, the sizes of the constructions: total number of bigrades (which coincides with the number of simplices for 1-critical bifiltrations); number of generators after minimal presentation; number of summands after MMA.
This shows that our construction offers a clear speed-up for computing an MMA from a point set compared to the Delaunay bifiltration, when $X$ is sufficiently large.
In \cref{fig:timings},
% for 500 landmarks, the Flood bifiltration becomes faster than the Delaunay bifiltration starting from $|X|\simeq1,000$.
\changed{using the Flood bifiltration becomes worthwhile starting from $|X|$ between 200 and 3,000 depending on the dimension $d$ and the number of landmarks.}
As $|X|$ continues to grow, the computation time scales more favorably than for other methods: this is due to the fixed size of the complex on which the bifiltration is built.

\begin{figure}
  \includegraphics[width=\linewidth]{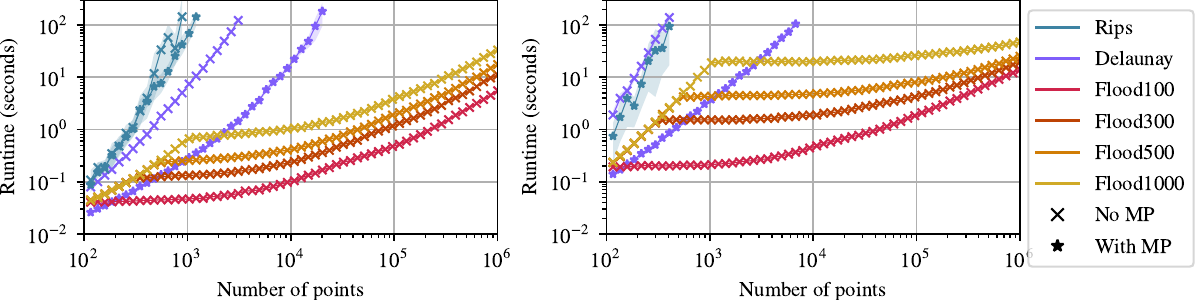}
  \caption{Running time of the bifiltration computation and its multiparameter module approximation. The homology is computed for degrees $\hdim\in\{0,1\}$ in $\bbr^2$ (left) and degrees $\hdim\in\{0,1,2\}$ in $\bbr^3$ (right).}
  \label{fig:timings}
\end{figure}

\subsection{Approximation}

\cref{fig:approximation} assesses the ability of the sublevel Flood bifiltration to estimate the topology of the reference sublevel offset bifiltration, by reporting the matching distance (\cref{sec_additional_background}) $\matchd\left(\homol{\hdim}(\flood(X,L,\gamma)), \homol{\hdim}(\alphaf{}(X,\gamma))\right)$ for $\hdim\in\{0,1,2\}$ when the size of $L$ increases until $L=X$.
\begin{figure}
  \centering
  \includegraphics[width=.3295\linewidth]{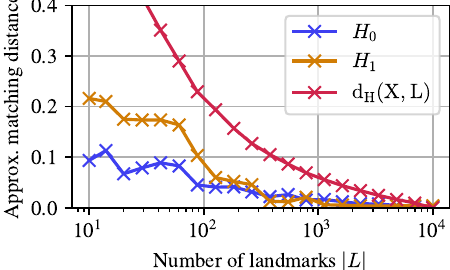}
  \includegraphics[width=.3295\linewidth]{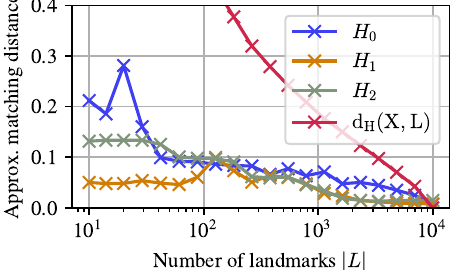}
  \includegraphics[width=.3295\linewidth]{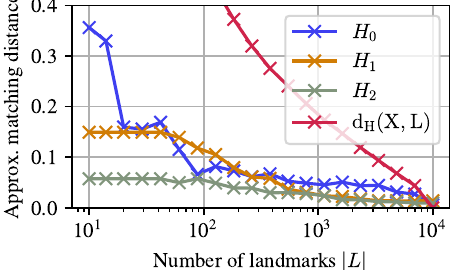}
  \caption{Matching distance between the modules of sublevel Flood and sublevel Delaunay bifiltrations, when the number of landmarks $|L|$ increases.
  $X$ has 10,000 points distributed over the noisy ring in $\bbr^2$ (left), the noisy sphere in $\bbr^3$ (center) and the noisy torus in $\bbr^3$ (right).}
  \label{fig:approximation}
\end{figure}
We also evaluate in \cref{fig:stability} the empirical stability of the construction with respect to the input $X$ by computing $\matchd\left(\floodmodule(X,L_X,\gamma_X), \floodmodule(Y,L_Y,\gamma_Y)\right)$ with $\lVert x_i-y_i\rVert\leq\varepsilon$, both when $L_X=L_Y$ is fixed and when $L_X$ and $L_Y$ are computed independently from $X$ and $Y$.

% \subsection{Datasets}

% The datasets used in Flood \cite{graf2025floodcomplex}:
% \begin{itemize}
%   \item Synthetic datasets 
%   \begin{itemize}
%     \item \texttt{swiss\_cheese}: uniform points in unit cube, with $k\in\{10,20\}$ random balls removed
%     \item \texttt{rocks}: synthetic porous materials with the \texttt{PoreSpy} library~\cite{gostick2019porespy}
%   \end{itemize}
%   \item Real-world datasets 
%   \begin{itemize}
%     \item \texttt{modelnet10}: simple objects (few topological features)~\cite{wu20153d}
%     \item \texttt{mcb}: more geometrically and topologically interesting features~\cite{kim2020large}
%     \item \texttt{coral}: point sampled on surfaces meshes of corals \url{https://3d.si.edu/corals}
%   \end{itemize}
% \end{itemize}

\subsection{Classification problems}

To demonstrate the practical value of the sublevel Flood bifiltration% and its ability to capture meaningful topological features
, we compare its performance against other bifiltrations and single-parameter homology in classification tasks.
The specifics of our datasets are provided in \cref{sec_dataDescription}.

For bifiltrations, we vectorize the multiparameter module approximation (MMA) into either stable candidate decomposition representations (SCDR)~\cite{loiseaux2023framework} or multiparameter persistence landscapes (MPL)~\cite{vipond2020multiparameter}.
To evaluate the interest of the multiparameter framework, we also compare to single-parameter filtrations: a diameter-based filtration (Delaunay--\cech) and a density-based filtration (sublevel filtration of $\gamma$ evaluated on a regular cubical complex in the ambient space). The obtained persistence diagrams are vectorized
using the ATOL method~\cite{royer2021atol} (respectively "diameterPD" and "densityPD"). We also consider the concatenation of both vectorizations ("bothPD").
All vectorizations are given to a LightGBM~\cite{ke2017lightgbm} classifier with default parameters. 
Hyperparameters are validated with 5-fold cross-validation, using the Bayesian optimization framework Optuna~\cite{akiba2019optuna} with 50 trials.
For SCDR, we cross-validate the power $p\in\{0,1\}$, the bandwidth $h\in[10^{-3},10^0]$ on a log-scale, and the resolution in \{$10\times10, 20\times20, 50\times50$\}.
For MPL, only the resolution is cross-validated in the same range.
For ATOL, the number of $k$-means centroids is selected in $\{32,64,128\}$ and the contrast function (\cite{royer2021atol}, Eq. 1) is selected in $\{\text{Gaussian},\text{Laplacian},\text{indicator}\}$.

\paragraph{Toy synthetic dataset (\cref{fig:synthetic_scores}, left, and \cref{table:synthetic_timings})}
This dataset contains point sets with lower-density regions of varying sizes. The goal is to classify the point sets based on the diameter of these lower-density areas, where a difficulty parameter controls how well-separated the classes are. On this task, 2-parameter persistence methods outperform single-parameter ones, for all difficulty parameters.
%In particular, they outperform the concatenation of both diameter- and density-based persistence diagrams,
This suggests that they capture meaningful topological information that is not captured by single-parameter PH.
Besides, except at the highest difficulty level, the gap in accuracy score between Delaunay and Flood bifiltrations is negligible, and the number of landmarks has little influence on the score: this confirms that what is most important for the classification accuracy in this task is the 2-parameter persistence features that exist for large diameter parameters $r$. These features do not require a particularly fine complex to be captured, as they correspond to the low-density regions.

\begin{figure}
  \centering
  \includegraphics[width=\linewidth]{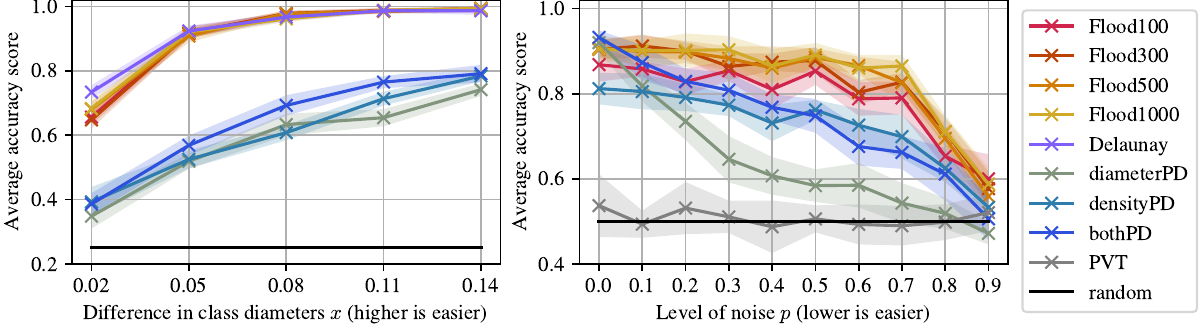}
  \caption{Accuracy scores (averaged over 10 splits) of single-parameter PH (diameterPD, densityPD and bothPD) and multiparameter PH (Flood and Delaunay).
  \textbf{Left:} Synthetic noisy swisscheese toy datasets in $\bbr^2$ parameterized by difficulty $x$.
  The dataset contains here 800 sets equally split into four classes characterized by the diameter of their low-density regions. Each set contains 5,000 points.
  \textbf{Right:} Synthetic porous material datasets on a $64^3$ grid.
  The problem difficulty increases with the parameter $p$ that quantifies the level of ambient noise.
  We also show the result obtained with the neural network-based method PVT~\cite{zhang2022pvt} for reference.%following the methodology of \cite{graf2025floodcomplex}.
  }
  \label{fig:synthetic_scores}
\end{figure}

\begin{table}
  \centering
    \caption{Timing in seconds for the noisy swisscheese datasets with 5,000 and 20,000 points (\texttt{NSC-*D-*k}) and the porous rocks dataset (\texttt{rocks64}), averaged over all their difficulties. The left number is the computation time of MMAs of the whole dataset. The right number is the classification time from the MMAs (averaged over the 10 splits). \oom\ means that memory was exceeded. A point set in \texttt{rocks64} has on average 69,000 points for $p=0$ and 130,000 points for $p=0.9$.}
  \begin{tabular}{|l|rrrrr|}
  \hline
    Dataset                 & Flood100 &   Flood300 &   Flood500 &   Flood1000 &   Delaunay \\
  \hline
    \texttt{NSC-2D-5k} & 38.3 / 17.4 & 85.3 / 32.3 & 142.7 / 49.1 & 312.6 / 57.8 & 1484.6 / 91.8  \\
    \texttt{NSC-2D-20k} & 91.3 / 11.5 & 150.2 / 12.9 & 219.3 / 19.4 & 416.4 / 38.3 & \oom \\
    \texttt{NSC-3D-5k} & 183.9 / 24.8 & 800.7 / 43.6 & 2309.3 / 53.0 & 13990.6 / 130.3 & \oom \\
    \texttt{rocks64} & 602.7 / 11.3 & 1086.7 / 16.1 & 1815.3 / 41.8 & 5925.6 / 70.3 &  \oom \\ 
  \hline
      %  38.3 &       85.3 &      142.7 &       312.6 &     1484.6 &          7.5 \\
      %  63.1 &      151.5 &      231.7 &       268.1 &      363.4 &         82.7 \\
      %  17.4 &       32.3 &       49.1 &        57.8 &       91.8 &         15.2 \\
  \end{tabular}
  \label{table:synthetic_timings}
\end{table}

% \begin{figure}
%   \centering
%   \includegraphics[width=.48\linewidth]{figures/swisscheese_2_5000_scores.pdf}
%   \includegraphics[width=.48\linewidth]{figures/swisscheese_2_20000_scores.pdf}
% %   \begin{minipage}{0.48\linewidth}
% %   \includegraphics[width=\linewidth]{figures/classification_2Dtoy_score.pdf}
% %   \end{minipage}
% %   \begin{minipage}{0.48\linewidth}
% %   \begin{tabular}{lr}
% %   \hline
% %   Method       &   Mean runtime (s) \\
% %   \hline
% %   Flood100-MMA &               38.4 \\
% %   Flood300-MMA &               80.7 \\
% %   Flood500-MMA &              126.0 \\
% %   Delaunay-MMA &              607.3 \\
% %   Delaunay-PI  &             (41.1) \\
% %   Density-PI   &                5.1 \\
% %   Both-PI      &             (45.1) \\
% %   \hline
% %   \end{tabular}
% % \end{minipage}
%   \caption{Accuracy scores (averaged on 10 splits) of single-parameter PH (diameterPD, densityPD and bothPD) and multiparameter PH (Flood and Delaunay) on the toy, synthetic noisy swisscheese datasets parameterized by difficulty $x$.
%   The dataset contains here 800 sets equally split in the four classes characterized by the diameter of their low-density regions. Each set contains 5,000 (left) or 20,000 (right) points in $\bbr^2$.
%   The black line shows the score of a random classifier (0.25).} 
% \end{figure}

\paragraph{Porous rocks dataset (\cref{fig:synthetic_scores}, right, and \cref{table:synthetic_timings})} This dataset models porous material generated using various noise generators. The task consists of classifying according to the used generator. The task is rendered more difficult as the added ambient noise increases.
% The second dataset models porous material and is generated with the \texttt{PoreSpy} library~\cite{gostick2019porespy}.
% It generates binary $64^3$ grids that we convert into a point set, with some Gaussian noise (to avoid the generic position due to the lattice structure).
% The porosity is chosen randomly, uniformly in $[0.05,0.95]$.
% The classification task consists in determining which porous material generator was used, between:
% \texttt{blobs} that generates random noise combined with Gaussian blur, with a blobiness parameter chosen uniformly in $[0.5,2]$;
% and \texttt{fractals} which is simplex noise with a frequency chosen uniformly in $[0.05,0.2]$.
% To measure the ability of multiparameter PH to handle noise and outliers, we parameterize the difficulty of the problem with an ambient noise parameter which represents the probability $p$ of each element of the binary grid to be set to 1 (after the binary generation and before the point set conversion). We only use $H_2$.
% Results are shown in \cref{fig:synthetic_scores}, right.
Again, the Flood bifiltration outperforms the single-parameter methods when the level of noise is high ($p\geq0.2$). Here, the number of landmarks has more impact on the accuracy score: this suggests that generator classification is easier when low-diameter 2-parameter persistence features are captured.

% \begin{figure}
%   \centering
%   \includegraphics[width=.48\linewidth]{figures/rocks_64_scores.pdf}
%   \caption{Accuracy scores (averaged on 10 splits) of single-parameter PH (diameterPD, densityPD and bothPD) and multiparameter PH (Flood and Delaunay) on the synthetic porous material datasets on a $64^3$ grid.
%   The problem difficulty decreases with the parameter $x$ that quantifies the level of ambient noise.
%   The black line shows the score of a random classifier (0.5)}
% \end{figure}

% \begin{figure}
%   \centering
%   \includegraphics[width=.35\linewidth]{UCR_classif_time}
%   \caption{}
%   \label{fig:timings_ucr}
% \end{figure}

\paragraph{Time series datasets}
Finally, we conducted classification experiments on the time series UCR datasets~\cite{dau2019ucr} using time-delay embeddings. \cref{table:ucr_scores} (and \cref{table:ucr_scores_additional} in \cref{sec_additionalExperiments}) show that, generally speaking, the sublevel Flood bifiltration is on par with the Delaunay bifiltration in terms of classification accuracy.
Its key strength rather lies in enabling the use of larger embeddings (i.e., longer
\begin{wrapfigure}[5]{r}{0.35\textwidth}
  \vspace{-1\baselineskip}
  \includegraphics[width=\linewidth]{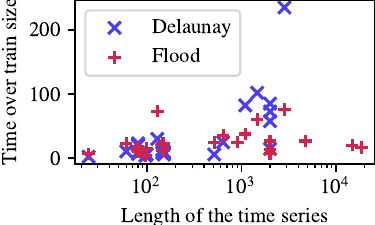}
\end{wrapfigure}
time series): see the out-of-memory (OOM) errors encountered with Delaunay on \texttt{KLC}, \texttt{CD}, \texttt{BH}.
The inlined figure shows the classification time from the MMA (averaged over the cross-validation trials, and divided by the training dataset size) and illustrates the improved scalability of our approach.

\begin{table}
  \centering
  \caption{Accuracy scores on UCR time series datasets. $\deln$ and $\flood$ denote the Delaunay and Flood bifiltrations, respectively.}
  %\begin{tabular}{|l||rrr|r|rrrr|}
  \begin{tabular}{|l||rr|r|rrrr|}
  \hline
  % Dataset & ED & DTWl & DTWc & \cech\ & $\deln$/MPL & $\flood$/MPL & $\deln$/SCDR & $\flood$/SCDR \\
  Dataset & ED & DTWc & \cech\ & $\deln$/MPL & $\flood$/MPL & $\deln$/SCDR & $\flood$/SCDR \\
  \hline
\texttt{DPOAG} & 0.626 & \textbf{0.770} & 0.698 & 0.741 & 0.669 & 0.698 & 0.719\\
\texttt{DPOC} & 0.717 & 0.717 & 0.699 & 0.641 & 0.649 & 0.659 & \textbf{0.732}\\
\texttt{DPTW} & 0.633 & 0.590 & \textbf{0.647} & 0.576 & \textbf{0.647} & 0.597 & 0.640\\
\texttt{IPD} & \textbf{0.955} & 0.950 & 0.710 & 0.722 & 0.724 & 0.747 & 0.724\\
\texttt{PC} & \textbf{0.933} & 0.878 & 0.889 & 0.744 & 0.728 & 0.883 & 0.911\\
\texttt{LN2} & 0.754 & \textbf{0.869} & 0.721 & 0.525 & 0.541 & 0.607 & 0.705\\
\texttt{Haptics} & 0.370 & \textbf{0.377} & 0.367 & 0.192 & 0.237 & 0.347 & 0.198\\
\texttt{ACSF1} & 0.540 & 0.640 & 0.740 & 0.380 & 0.380 & 0.400 & \textbf{0.750}\\
\texttt{PigCVP} & 0.082 & 0.154 & 0.139 & 0.019 & 0.019 & \textbf{0.226} & 0.207\\
\texttt{PArtP} & 0.125 & 0.245 & 0.361 & 0.029 & 0.024 & \textbf{0.635} & 0.433\\
\texttt{Rock} & \textbf{0.840} & 0.600 & 0.420 & 0.420 & 0.420 & 0.420 & 0.420\\
\texttt{KLC} & 0.278 & 0.815 & \textbf{0.835} & \oom & 0.145 & \oom & 0.802\\
\texttt{CD} & 0.445 & 0.610 & 0.555 & \oom & 0.555 & \oom & \textbf{0.640}\\
\texttt{BH} & \textbf{0.746} & 0.610 & 0.737 & \oom & 0.717 & \oom & 0.737\\
  \hline
  \end{tabular}
  \label{table:ucr_scores}
\end{table}

% \begin{figure}
%   \centering
%   \includegraphics[width=.4\linewidth]{UCR_MMA_time.pdf}
%   \includegraphics[width=.4\linewidth]{UCR_classif_time.pdf}
%   \caption{Timings on the UCR data according to the length of the time series.
%   \textbf{Left: } MMA computation time (averaged on the whole dataset)
%   \textbf{Right: } Classification time (averaged on the cross-validation trials, and divided by the train dataset size).}
% \end{figure}

\section{Conclusion and future work}
\label{sec_conclusion}

In this work, we introduced the \textit{sublevel Flood bifiltration}, an extension of the Flood filtration to the context of sublevel-based multiparameter persistent homology.
It provides a practical approximation of the reference sublevel Delaunay bifiltration while being more scalable.

Some limitations of this work are inherent to the landmark-based approach. Indeed, the construction will miss the topological features that are smaller than the resolution of $\del(L)$. Besides, relying on $\del(L)$ limits the ambient dimension that we are able to consider, and leads to instabilities when the combinatorics of $\del(L)$ changes.
As future work, we plan to investigate more deeply the choice of landmarks, for instance with density-aware approaches. %, and to improve the coarsening strategy with a procedure that removes carefully bigrades when it reduces the less the accuracy.
Finally, we think that there remains room for improving theoretical guarantees of both Flood and sublevel Flood, to explain, possibly with more assumptions, why this approach provides far better results than subsampling in a lot of cases.
% \paragraph{Limitations}
% \begin{itemize}
%   \item inherent to the method: resolution limited by the resolution of the $\del(L)$ (miss to small topological features); limited in ambiant dimension to be able to compute $\del(L)$; instability with respect to $L$ when the combinatorics of $\del(L)$ changes
%   \item investigate more deeply the choice of landmarks (density-aware sampling~\cite{climaco2025density}) 
%   \item theoretical improvement: better approximation results in specific cases (first, on Flood filtration)
%   \item improve coarsening procedure for a better accuracy
% \end{itemize}

\section*{Acknowledgments}
This work is partially supported by the European Commission grant ERC-2019-COG \emph{``TORI''} (ref. 863464, \url{https://erc-tori.github.io/}) and by ANR-23-PEIA-0004 (PDE-AI, \url{https://pde-ai.math.cnrs.fr/}).
We gratefully acknowledge support from the CNRS/IN2P3 Computing Center (Lyon, France) for providing computing and data-processing resources needed for this work.

\bibliographystyle{abbrv}
\bibliography{biblio}

\newpage
\hrule height 4pt
\vskip 0.25in
\begin{center}
{\LARGE\bfseries Supplementary material\par}
\end{center}
\vskip 0.29in
\vskip -\parskip
\hrule height 1pt
\vskip 0.09in

\appendix
\section{Additional background}
\label{sec_additional_background}

In this section, we provide formal definitions for the multiparameter persistence modules and the distances between them that are used in the paper. 
We write $\preceq$ for the partial order on $\bbr^n$, i.e., $\alpha\preceq\beta$ if and only if $\alpha_i\leq\beta_i$ for all $i\in\llbracket1,n\rrbracket$.

\paragraph{Multiparameter persistence module} A \textit{multiparameter persistence module} with $n$ parameters is an indexed family of vector spaces $M=(M_\alpha)_{\alpha\in\bbr^n}$, with a family of \textit{transition maps} $\varphi_{\alpha,\beta}:M_\alpha\to M_\beta$ for all $\alpha\preceq\beta$, that verify $\varphi_{\alpha,\gamma}=\varphi_{\beta,\gamma}\circ\varphi_{\alpha,\beta}$ for all $\alpha\preceq\beta\preceq\gamma$.

Often, a multiparameter persistence module is obtained by applying the homology functor $\homol{\hdim}$ on a multifiltration $(\calk_\alpha)_{\alpha\in\bbr^n}$, giving $M=(\homol{\hdim}(\calk_\alpha))_{\alpha\in\bbr^n}$. The linear maps $\varphi_{\alpha,\beta}$ are induced by the simplicial inclusions $\calk_\alpha\subseteq\calk_\beta$ for all $\alpha\preceq\beta$.

\paragraph{Interleaving distance} The \textit{interleaving distance}~\cite{lesnick2015theory} allows multiparameter persistence modules to be compared. If $M$ is a multiparameter persistence module, its $\varepsilon$-shift is the module $M_\varepsilon=(M_{\alpha+\vec{\varepsilon}})_{\alpha\in\bbr^n}$ with $\vec{\varepsilon}=(\varepsilon,\ldots,\varepsilon)\in\bbr^n$.
Two multiparameter persistence modules $M$ and $N$ (of transition maps $\varphi$ and $\psi$ respectively) are $\varepsilon$-interleaved if there exist $f:M\to N_\varepsilon$ and $g:N\to M_\varepsilon$ such that $g_{\cdot+\varepsilon}\circ f_\cdot=\varphi_{\cdot,\cdot+2\varepsilon}$ and $f_{\cdot+\varepsilon}\circ g_\cdot=\psi_{\cdot,\cdot+2\varepsilon}$.
Then, the interleaving distance between two multiparameter persistence modules $M$ and $N$ is defined as
\[\interl(M,N)=\inf\{\varepsilon>0\mid \text{$M$ and $N$ are $\varepsilon$-interleaved}\}.\]

\paragraph{Matching distance}
\changed{Computing the interleaving distance (and even any approximation by a factor smaller than 3) is \np-complete~\cite{bjerkevik2020computing}.}
The \textit{matching distance}~\cite{cerri2013betti} between two 2-parameter persistence modules $M$ and $N$ is defined over the associated fibered barcodes, i.e., the collection of (single-parameter) persistence barcodes obtained when restricting a module to one-dimensional slices $\ell$ defined by a line of equation $y=ax+b$ (giving the single-parameter persistence module $M_\ell=\left(M_{x,ax+b}\right)_{x>0}$).
For each slice $\ell$, we define the following distance along $\ell$:
\begin{equation}
\dist_\mathrm{slice}^{M,N}(\ell)=w(\ell)\bottle(M_\ell, N_\ell)\text{ with }w(\ell)=\left\{\begin{array}{ll}
  \frac{1}{\sqrt{1+a^2}} & \text{if } a\geq1\\
  \frac{1}{\sqrt{1+1/a^2}} & \text{if } 0<a<1
\end{array}\right.
\end{equation}
where $\bottle$ is the single-parameter bottleneck distance.
Then, the matching distance is defined as
\begin{equation}
\matchd(M,N)=\sup_{\ell}\dist_\mathrm{slice}^{M,N}(\ell)
\end{equation}
where the $\sup$ is over the slices with $a>0$. The correcting weights $w(\ell)$ are here to ensure that $\matchd(M,N)\leq\interl(M,N)$.
Both exact~\cite{kerber2019exact,bjerkevik2023asymptotic} and approximate~\cite{kerber2020efficient} algorithms have been proposed to compute the matching distance.

\begin{figure}
	\centering
  \begin{overpic}[width=.4\linewidth]{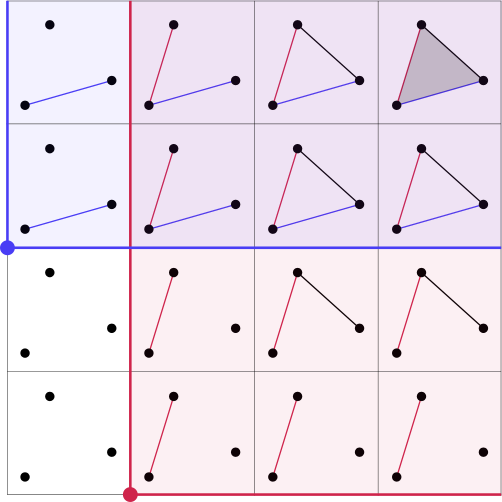}
    \put(12.5,-5){0}
    \put(37.5,-5){1}
    \put(62.5,-5){2}
    \put(87.5,-5){3}
    \put(-5,12.5){0}
    \put(-5,37.5){1}
    \put(-5,62.5){2}
    \put(-5,87.5){3}
  \end{overpic}
	\hspace{1cm}
	\begin{overpic}[width=.4\linewidth]{kcriticality.png}
    \put(12.5,-5){0}
    \put(37.5,-5){1}
    \put(62.5,-5){2}
    \put(87.5,-5){3}
    \put(-5,12.5){0}
    \put(-5,37.5){1}
    \put(-5,62.5){2}
    \put(-5,87.5){3}
  \end{overpic}
  \vspace{.2cm}
	\caption{Two bifiltrations over a filled triangle indexed over $\llbracket0,3\rrbracket^2$. The left one is 1-critical while the right one is 3-critical. The blue (resp. red) dots represent the minimal set of bigrades $\bigrades(\sigma)$ of the blue (resp. red) edge. Their respective supports $\supp(\sigma)$ are highlighted with the same color.}
  \label{fig:bifiltration_criticality}
\end{figure}

\section{Proofs}
\label{sec_proofs}

This section contains the proofs of the theorems stated in \cref{sec:properties,sec:computational_approach}.

\begin{proof}[\textbf{Proof of \cref{th:stability_X_gamma}}]
  Let $\sigma\in\flood_{r,s}(X,L,\gamma_X)$ and $p\in\conv\sigma$. There is an $x\in X$ such that $\dist(p,x)\leq r$ and $\gamma_X(x)\leq s$.
  Then, there exists $y\in Y$ such that $\dist(x,y)\leq\hausdo(X,Y)$. Therefore $\dist(p,y)\leq\dist(p,x)+\dist(x,y)\leq r+\hausdo(X,Y)$.
  Besides:
  \begin{align*}
    \gamma_Y(y)-s&\leq\gamma_Y(y)-\gamma_X(x)\\
    &\leq|\gamma_Y(y)-\gamma_X(x)|\\
    &\leq|\gamma_Y(y)-\gamma_Y(x)|+|\gamma_Y(x)-\gamma_X(x)|\\
    &\leq\lipconst\hausdo(X,Y)+\lVert\gamma_X-\gamma_Y\rVert_\infty
  \end{align*}
  This means that $\sigma\in\flood_{r+\hausdo(X,Y),s+\lipconst\hausdo(X,Y)+\lVert\gamma_X-\gamma_Y\rVert_\infty}(Y,L,\gamma_Y)$. By writing $\alpha=\max\left(\hausdo(X,Y),\lipconst\hausdo(X,Y)+\lVert\gamma_X-\gamma_Y\rVert_\infty\right)$, we have:
  \begin{align*}
    \flood_{r,s}(X,L,\gamma_X)&\subseteq\flood_{r+\alpha,s+\alpha}(Y,L,\gamma_Y)\\
    \flood_{r,s}(Y,L,\gamma_Y)&\subseteq\flood_{r+\alpha,s+\alpha}(X,L,\gamma_X)
  \end{align*}
  These simplicial inclusions provide an $\alpha$-interleaving between the two modules $\floodmodule(X,L,\gamma_X)$ and $\floodmodule(Y,L,\gamma_{Y})$, hence the result.
\end{proof}

\begin{proof}[\textbf{Proof of \cref{th:stability_X}}]
  Under these conditions we have $\hausdo(X,Y)\leq\varepsilon$. Besides:
  \begin{itemize}
    \item if $\gamma$ is obtained by KDE with an $\lipconst_K$-Lipschitz kernel $K$ and bandwidth $h$, then both $\gamma_X$ and $\gamma_Y$ are $\frac{\lipconst_K}{h^{d+1}}$-Lipschitz and one easily checks that $\lVert\gamma_X-\gamma_Y\rVert_\infty\leq\varepsilon\frac{\lipconst_K}{h^{d+1}}$;
    \item if $\gamma$ is obtained by DTM with parameter $m\in(0,1]$, then both $\gamma_X$ and $\gamma_Y$ are 1-Lipschitz, and besides (see, e.g., Theorem 3 of~\cite{chazal2018robust} for the properties of DTM):
    \[\lVert\gamma_X-\gamma_Y\rVert_\infty\leq\frac{1}{\sqrt{m}}\wasser{2}(X,Y)\leq\varepsilon\frac{1}{\sqrt{m}}.\]
  \end{itemize}
  \cref{th:stability_X_gamma} allows us to obtain the upper bound of the interleaving distance between both modules in the two cases.
\end{proof}

%\begin{proof}[\textbf{Proof of \cref{th:approximation}}]
%  \remark{to be proved}
%  \begin{align*}
%     \interl\left(\floodmodule(X,L,\gamma), \offsetmodule(X,\gamma)\right) \leq
%       & \interl\left(\floodmodule(X,L,\gamma), \floodmodule(L,L,\gamma)\right) \quad\big\}\leq\max(1,\lipconst)\hausdo(X,L)\\
%     + & \interl\left(\floodmodule(L,L,\gamma), \offsetmodule(L,\gamma)\right) \\
%     + & \interl\left(\offsetmodule(L,\gamma), \offsetmodule(X,\gamma)\right) \hspace{1.1cm}\big\}\leq\max(1,\lipconst)\hausdo(X,L)
%  \end{align*}
%  Problem:
%  \[\interl\left(\floodmodule(L,L,\gamma), \offsetmodule(L,\gamma)\right) ???\]
%  What would be nice: $\flood_r(L_s^\gamma,L)\simeq\flood_r(L_s^\gamma,L_s^\gamma)$ using $L_s^\gamma\subset L$.
%  In this case
%  \begin{align*}
%    \flood_{r,s}(L,L,\gamma)&=\flood_r(L_s^\gamma,L)\\
%    &\simeq\flood_r(L_s^\gamma,L_s^\gamma)\\
%    &\simeq\offsetf{r}(L_s^\gamma)\text{ (Theorem 3, \cite{graf2025floodcomplex})}\\
%    &=\offsetf{r,s}(L,\gamma)
%  \end{align*}
%\end{proof}

\begin{proof}[\textbf{Proof of \cref{th:computation}}]
  Recall that $\cflood_{r,s}(X,L,\gamma)=\left\{\sigma\in\del(L)\mid\conv(\sigma)\subset\bigcup\limits_{x\in X^\sigma_s}\ball_r(x)\right\}$ with
  \[X^\sigma=\bigcup_{\tau\in\cell(\sigma)}\meb_{\sqrt{2}}(\tau)\cup\bigcup_{l\in\vstar(\sigma)}X^{\{l\}}\quad\text{ and }\quad X^\sigma_s=\{x\in X^\sigma\mid\gamma(x)\leq s\}.\]

  We first show that $\cflood_{r,s}(X,L,\gamma)$ is a simplicial complex.
  Let $\sigma\in\cflood_{r,s}(X,L,\gamma)$ and $\tau\subset\sigma$.
  We have $\cell(\sigma)\subset\cell(\tau)$ and $\vstar(\sigma)\subset\vstar(\tau)$ because the cofaces of $\sigma$ are also cofaces of $\tau$. 
  Therefore, $X^\sigma\subset X^\tau$, hence $X^\sigma_s\subset X^\tau_s$.
  This implies that $\conv(\tau)\subset\conv(\sigma)\subset\bigcup\limits_{x\in X^\sigma_s}\ball_r(x)\subset\bigcup\limits_{x\in X^\tau_s}\ball_r(x)$, hence $\tau\in\cflood_{r,s}(X,L,\gamma)$.
  In addition, $(\cflood_{r,s}(X,L,\gamma))_{(r,s)\in\bbr^2}$ is clearly increasing with respect to the partial order $\preceq$ in $\bbr^2$.
  It is therefore a bifiltration.

  As $X^\sigma_s\subset X_s$ for all $s$ and $\sigma$, we have $\cflood_{r,s}(X,L,\gamma)\subseteq\flood_{r,s}(X,L,\gamma)$.
  Conversely, let $\sigma\in\flood_{r,s}(X,L,\gamma)$ and $p\in\conv\sigma$.
  There is an $x\in X$ such that $\dist(p,x)\leq r$ and $\gamma(x)\leq s$.
  If $x\in X^\sigma$, then we directly have $p\in\bigcup\limits_{x\in X^\sigma_s}\ball_r(x)$.
  If $x\not\in X^\sigma$, this means in particular that for all $l\in\sigma$, $x\not\in X^{\{l\}}$.
  Recall that $X^{\{l\}}$ contains the points that contribute to the minimal set of bigrades of $\{l\}$.
  Therefore, for each $l\in\sigma$, there exists an $x_l\in X^{\{l\}}\subset X^\sigma$ such that $\dist(x_l,l)\leq\dist(x,l)$ and $\gamma(x_l)\leq\gamma(x)\leq s$ (hence $x_l\in X^\sigma_s$).
  We have:
  \begin{align*}
    \dist(p,X^\sigma_s)&\leq\min_{l\in\sigma}\dist(p,x_l)\\
    &\leq\min_{l\in\sigma}\left(\dist(p,l)+\dist(x_l,l)\right)\\
    &\leq\min_{l\in\sigma}\left(\dist(p,l)+\dist(x,l)\right)\\
    &\leq\min_{l\in\sigma}\left(\dist(p,l)+\dist(x,p)+\dist(p,l)\right)\\
    &\leq2\min_{l\in\sigma}\left(\dist(p,l)\right)+r\\
    &\leq2\Delta(L)+r
  \end{align*}
  Hence $p\in\bigcup\limits_{x\in X^\sigma_s}\ball_{r+2\Delta(L)}(x)$.
  Therefore, $\flood_{r,s}(X,L,\gamma)\subseteq\cflood_{r+2\Delta(L),s}(X,L,\gamma)$.
  The two inclusions show that the multiparameter persistence modules of $\flood(X,L,\gamma)$ and $\cflood(X,L,\gamma)$ are $2\Delta(L)$-interleaved, hence the result. 
\end{proof}

\section{Computational approach}

\changed{
\cref{fig:sublevel_flood_computation} illustrates the batched tensor operations we use to compute the set of bigrades of a simplex in the sublevel Flood bifiltration.
Given the distance matrix $D_\sigma=\dist(P_\sigma,X^\sigma)$, we obtain the (non-minimal) set of bigrades by first applying a cumulative minimum operation along each row, and then taking the maximum on each column.
}

\begin{figure}
  \hspace*{-1mm}
  \raisebox{.5cm}{\includegraphics[width=.25\linewidth]{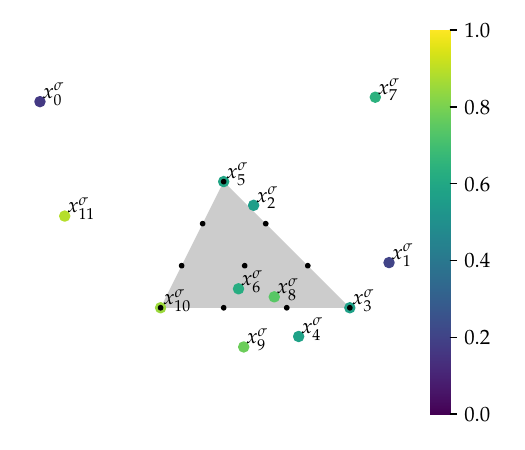}}
  \hspace*{-3mm}
  \raisebox{2cm}{\footnotesize$\overset{D_\sigma}{\longrightarrow}$}
  \hspace*{-2mm}
  \includegraphics[width=.33\linewidth]{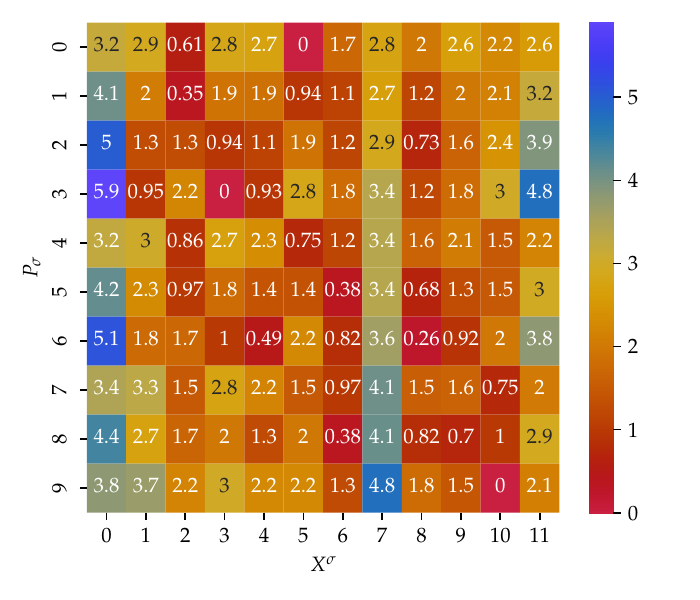}
  \hspace*{-3mm}
  \raisebox{2cm}{\footnotesize$\underset{\min_{x\in \Xincr{i}^\sigma}}{\overset{\text{cumul.}}{\longrightarrow}}$}
  \hspace*{-2mm}
  \includegraphics[width=.33\linewidth]{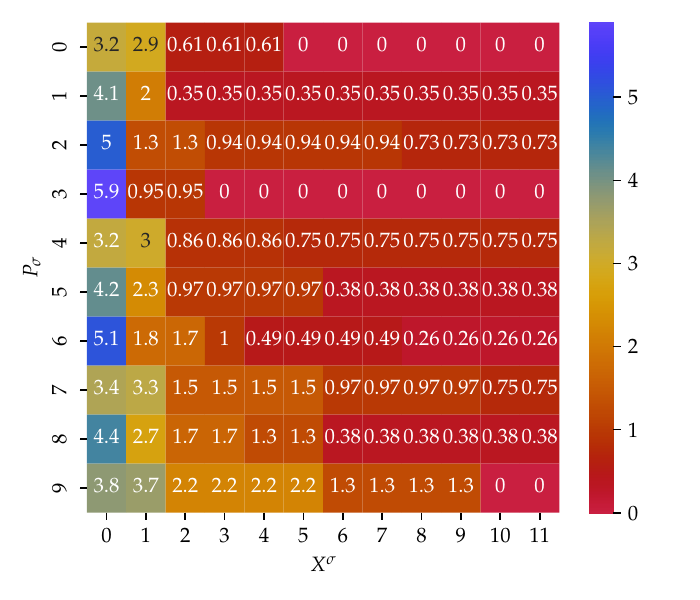}
  \hspace*{-3mm}\\
  \hspace*{3mm}
  \def\svgwidth{\linewidth}\scriptsize%% Creator: Inkscape 1.4.4 (1:1.4.4+202605061436+dcaf3e7d9e), www.inkscape.org
%% PDF/EPS/PS + LaTeX output extension by Johan Engelen, 2010
%% Accompanies image file 'compute.pdf' (pdf, eps, ps)
%%
%% To include the image in your LaTeX document, write
%%   \input{<filename>.pdf_tex}
%%  instead of
%%   \includegraphics{<filename>.pdf}
%% To scale the image, write
%%   \def\svgwidth{<desired width>}
%%   \input{<filename>.pdf_tex}
%%  instead of
%%   \includegraphics[width=<desired width>]{<filename>.pdf}
%%
%% Images with a different path to the parent latex file can
%% be accessed with the `import' package (which may need to be
%% installed) using
%%   \usepackage{import}
%% in the preamble, and then including the image with
%%   \import{<path to file>}{<filename>.pdf_tex}
%% Alternatively, one can specify
%%   \graphicspath{{<path to file>/}}
%% 
%% For more information, please see info/svg-inkscape on CTAN:
%%   http://tug.ctan.org/tex-archive/info/svg-inkscape
%%
\begingroup%
  \makeatletter%
  \providecommand\color[2][]{%
    \errmessage{(Inkscape) Color is used for the text in Inkscape, but the package 'color.sty' is not loaded}%
    \renewcommand\color[2][]{}%
  }%
  \providecommand\transparent[1]{%
    \errmessage{(Inkscape) Transparency is used (non-zero) for the text in Inkscape, but the package 'transparent.sty' is not loaded}%
    \renewcommand\transparent[1]{}%
  }%
  \providecommand\rotatebox[2]{#2}%
  \newcommand*\fsize{\dimexpr\f@size pt\relax}%
  \newcommand*\lineheight[1]{\fontsize{\fsize}{#1\fsize}\selectfont}%
  \ifx\svgwidth\undefined%
    \setlength{\unitlength}{853.47930908bp}%
    \ifx\svgscale\undefined%
      \relax%
    \else%
      \setlength{\unitlength}{\unitlength * \real{\svgscale}}%
    \fi%
  \else%
    \setlength{\unitlength}{\svgwidth}%
  \fi%
  \global\let\svgwidth\undefined%
  \global\let\svgscale\undefined%
  \makeatother%
  \begin{picture}(1,0.32161459)%
    \lineheight{1}%
    \setlength\tabcolsep{0pt}%
    \put(0,0){\includegraphics[width=\unitlength,page=1]{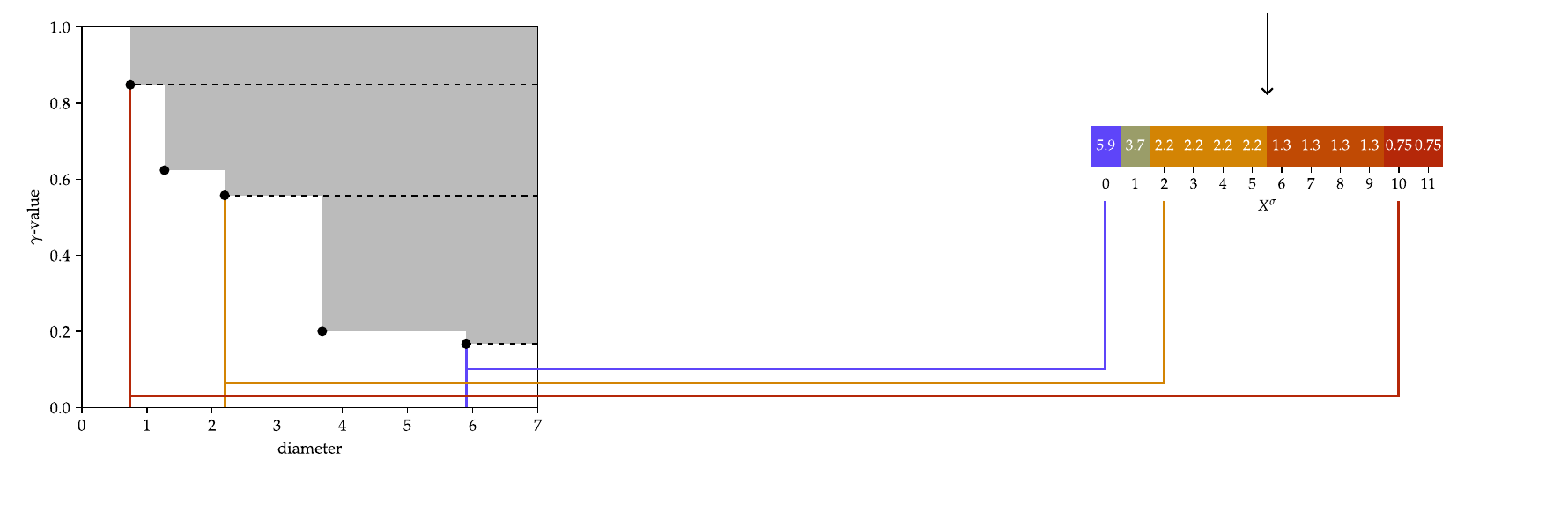}}%
    \put(0.3535961,0.09736868){\color[rgb]{0,0,0}\makebox(0,0)[lt]{\lineheight{1.25}\smash{\begin{tabular}[t]{l}$\gamma(x_0^\sigma)$\end{tabular}}}}%
    \put(0.3528047,0.19126245){\color[rgb]{0,0,0}\makebox(0,0)[lt]{\lineheight{1.25}\smash{\begin{tabular}[t]{l}$\gamma(x_2^\sigma)$\end{tabular}}}}%
    \put(0.35253527,0.26277419){\color[rgb]{0,0,0}\makebox(0,0)[lt]{\lineheight{1.25}\smash{\begin{tabular}[t]{l}$\gamma(x_{10}^\sigma)$\end{tabular}}}}%
    \put(0.82145529,0.28576147){\color[rgb]{0,0,0}\makebox(0,0)[lt]{\lineheight{1.25}\smash{\begin{tabular}[t]{l}$\max_{p\in P_\sigma}$\end{tabular}}}}%
  \end{picture}%
\endgroup%

  \caption[Tensor operations]{Illustration of the tensor operations that compute the set of bigrades of a triangle $\sigma\in\del(L)$.
  \textbf{Top left:} the sample $P_\sigma$ (black dots) has size 10 and the set of points $X^\sigma=\{x_0^\sigma,\ldots,x_{11}^\sigma\}$ has size 12 and is sorted and colored by $\gamma$ value.
  \textbf{Bottom left:} the support of $\sigma$ (in gray, with the minimal set of bigrades $\bigrades(\sigma)$ shown with black dots) is computed from the distance matrix $D_\sigma=\dist(P_\sigma,X^\sigma)$ of which we take the cumulative minimum operation along each row, and then the maximum along each column.}
  \label{fig:sublevel_flood_computation}
\end{figure}

\section{Additional experiments}
\label{sec_additionalExperiments}

This section contains additional experiments that we did not include in the main paper.

\subsection{Runtimes}
\cref{fig:timings_additional} shows the computation runtime of several bifiltrations, without including the multiparameter module approximation (MMA) (unlike \cref{fig:timings} which includes it).

\begin{figure}
  \includegraphics[width=\linewidth]{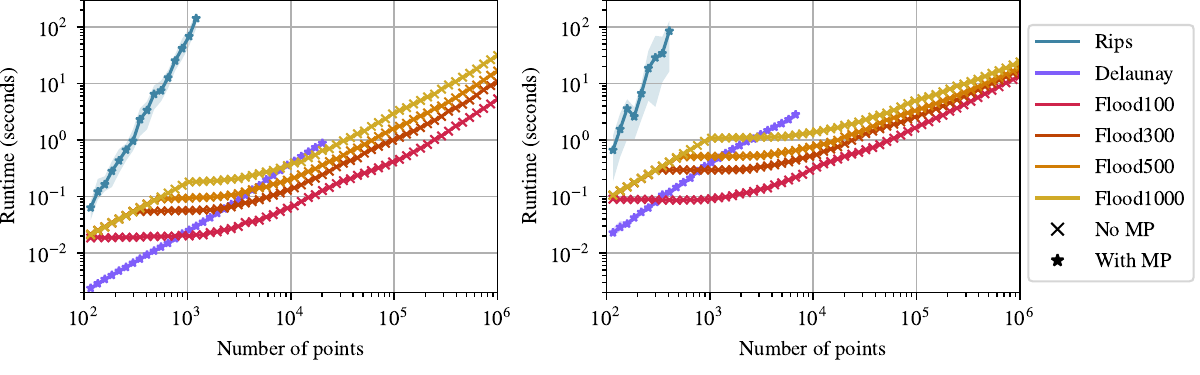}
  \caption{Running time of the bifiltration computation (plus its minimal presentation for Rips and Delaunay) for $X\subset\bbr^2$ (left) and $X\subset\bbr^3$ (right). Contrary to \cref{fig:timings}, this does not include multiparameter module approximation.}
  \label{fig:timings_additional}
\end{figure}

\subsection{Structure sizes}
\cref{fig:structure_sizes} shows the size of the bifiltrations (in terms of total number of bigrades) and the size of their multiparameter module approximation (in terms of number of summands in the module decomposition).
\changed{It also shows how the average criticality of the simplices in the sublevel Flood bifiltration increases with the number of points and decreases with the number of landmarks (sublevel Rips and Delaunay bifiltrations are not represented as their criticality is always 1).}

\begin{figure}
  \includegraphics[width=\linewidth]{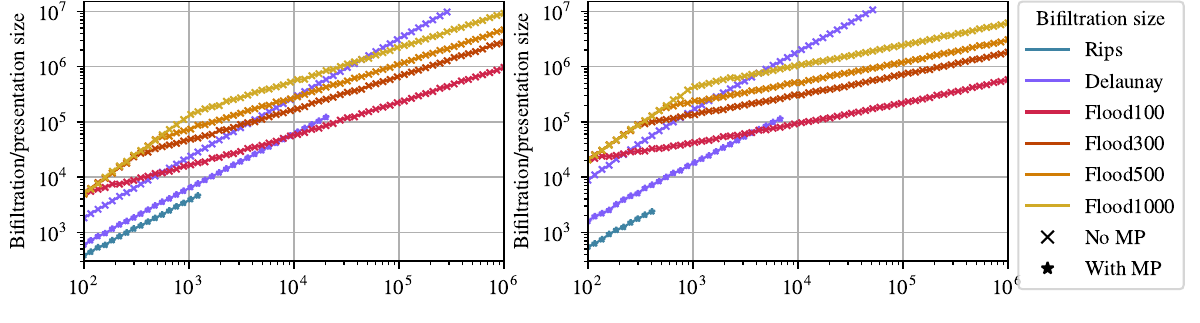}
  \includegraphics[width=\linewidth]{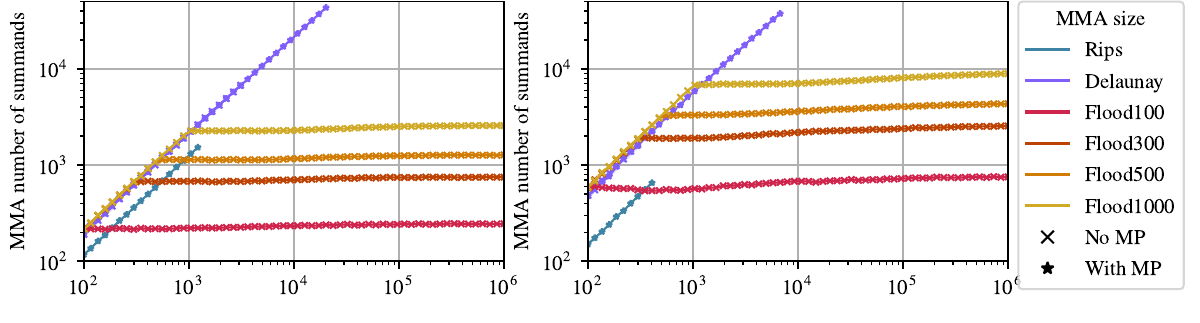}
  \includegraphics[width=\linewidth]{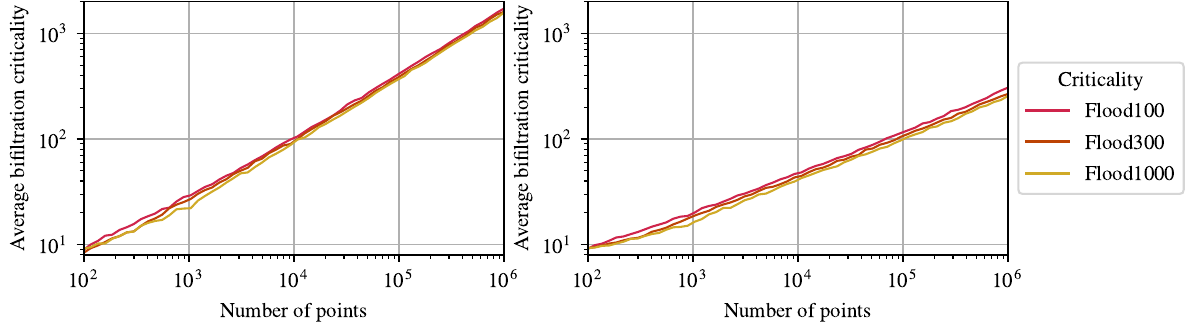}
  \caption{
  \textbf{Top line:} size of the bifiltrations, i.e., total number of stored bigrades. For 1-critical bifiltrations (all except sublevel Flood), this corresponds to the number of simplices. For the minimal presentations, the total number of generators is shown instead (all homology dimensions).
  \textbf{Middle line:} total number of summands in the multiparameter module approximation (all homology dimensions).
  \textbf{Bottom line:} average criticality $|\bigrades(\sigma)|$ in the sublevel Flood bifiltrations.}
  \label{fig:structure_sizes}
\end{figure}

\subsection{Empirical stability evaluation}
\cref{fig:stability} shows a study of the stability of the Flood bifiltration with respect to the input $X$, when $L$ is fixed (as in \cref{th:stability_X_gamma,th:stability_X}) and when $L$ is obtained separately from $X$ and its perturbation.

\begin{figure}
  \centering
  \includegraphics[width=.32\linewidth]{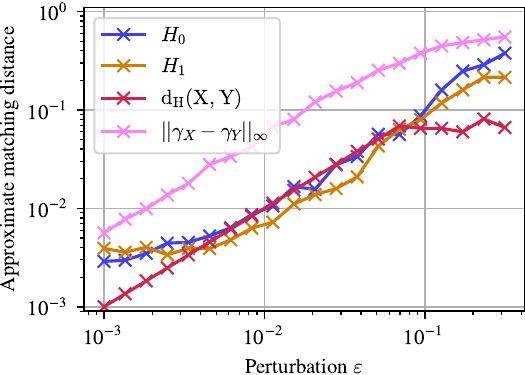}
  \includegraphics[width=.32\linewidth]{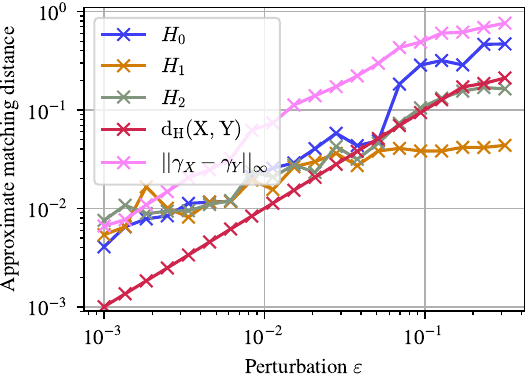}
  \includegraphics[width=.32\linewidth]{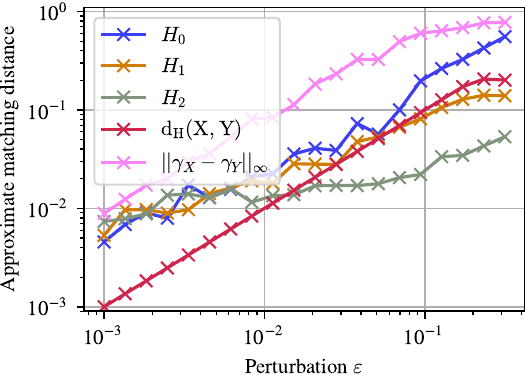}
  \includegraphics[width=.32\linewidth]{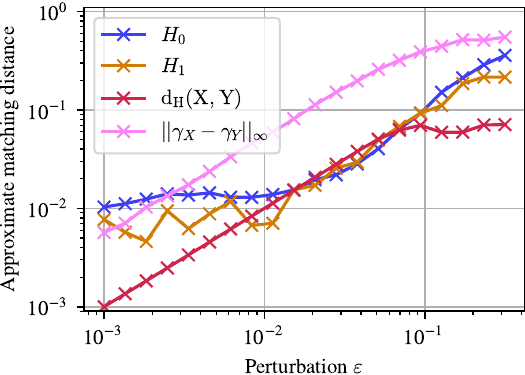}
  \includegraphics[width=.32\linewidth]{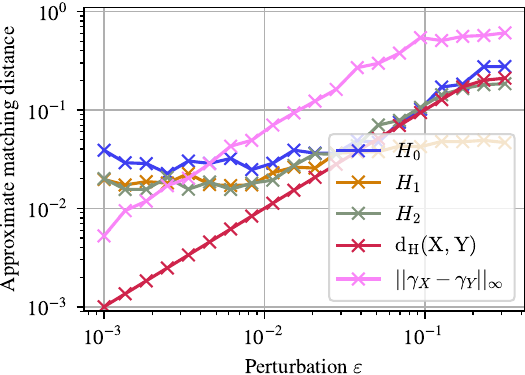}
  \includegraphics[width=.32\linewidth]{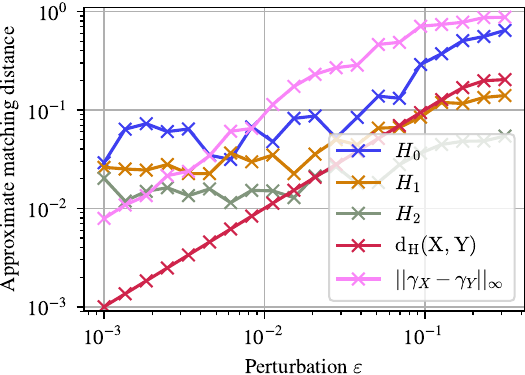}
  \caption{Stability of the sublevel Flood bifiltration with respect to $X$, as measured by the matching distance between $\floodmodule(X,L,\gamma_X)$ and $\floodmodule(X',L',\gamma_{X'})$ with $X'=X+E$ and with $\gamma_X$ and $\gamma_{X'}$ obtained as KDEs.
  In the top line, $L=L'$ is obtained as a FPS on $X$ (fixed landmarks, as in \cref{th:stability_X_gamma,th:stability_X}).
  In the bottom line, $L'$ is computed as a FPS from $X'$.
  $L$ has 1,000 points and $X$ has 10,000 points distributed over the noisy ring in $\bbr^2$ (left), the noisy sphere in $\bbr^3$ (center) and the noisy torus in $\bbr^3$ (right).}
  \label{fig:stability}
\end{figure}

\subsection{Additional classification results}
\cref{fig:swisscheese_2d_large} is similar to \cref{fig:synthetic_scores}, left, but with 20,000 points in $\bbr^2$ and 5,000 points in $\bbr^3$ per point set.
\changed{
  \cref{table:ucr_scores_additional} gives the accuracy scores on the UCR datasets that are not shown in the article.
}

\begin{figure}
  \centering
  \includegraphics[width=\linewidth]{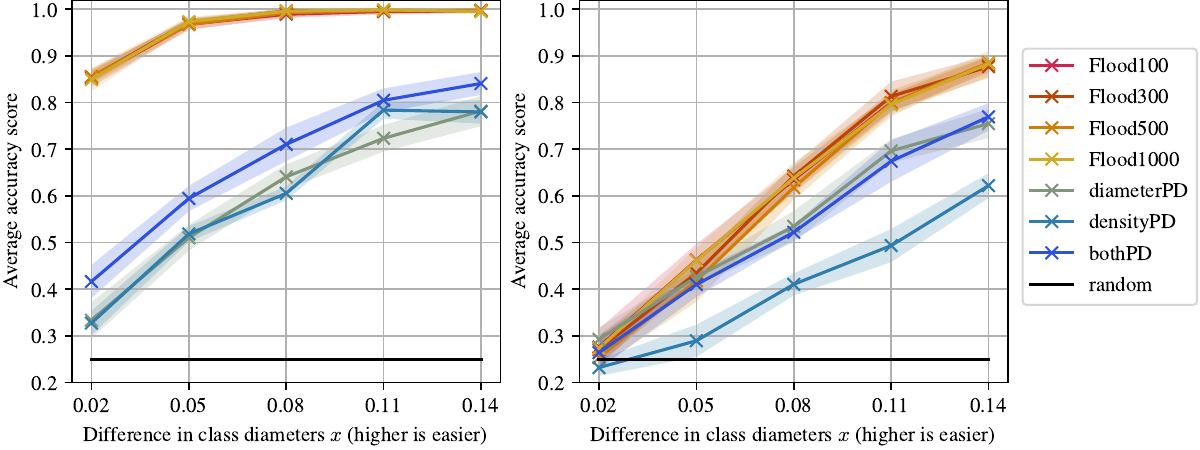}
  \caption{Accuracy scores (averaged over 10 splits) of single-parameter PH (diameterPD, densityPD and bothPD) and multiparameter PH (Flood) on the synthetic noisy swisscheese toy datasets parameterized by difficulty $x$. The dataset contains here 800 sets equally split into four classes characterized by the diameter of their low-density regions. The black lines show the score of a random classifier.
  \textbf{Left:} Each point set contains 20,000 points in $\bbr^2$.
  \textbf{Right:} Each point set contains 5,000 points in $\bbr^3$.
  }
  \label{fig:swisscheese_2d_large}
\end{figure}

\begin{table}
  \centering
  \caption{Accuracy scores on additional UCR time series datasets. $\deln$ and $\flood$ denote the Delaunay and Flood bifiltrations, respectively.}
  %\begin{tabular}{|l||rrr|r|rrrr|}
  \begin{tabular}{|l||rr|r|rrrr|}
  \hline
  % Dataset & ED & DTWl & DTWc & \cech\ & $\deln$/MPL & $\flood$/MPL & $\deln$/SCDR & $\flood$/SCDR \\
  Dataset & ED & DTWc & \cech\ & $\deln$/MPL & $\flood$/MPL & $\deln$/SCDR & $\flood$/SCDR \\
  \hline
\texttt{PPOAG} & 0.785 & 0.805 & 0.785 & \textbf{0.829} & 0.785 & 0.815 & 0.815\\
\texttt{PPOC} & 0.808 & 0.783 & \textbf{0.832} & 0.663 & 0.674 & 0.814 & 0.821\\
\texttt{PPTW} & 0.707 & 0.756 & 0.737 & \textbf{0.776} & 0.756 & 0.741 & 0.771\\
\texttt{ECG200} & \textbf{0.880} & 0.770 & 0.710 & 0.740 & 0.790 & 0.830 & 0.830\\
\texttt{MI} & 0.684 & \textbf{0.737} & 0.584 & 0.533 & 0.547 & 0.628 & 0.514\\
\texttt{Plane} & 0.962 & \textbf{1.000} & 0.876 & 0.629 & 0.790 & 0.933 & 0.610\\
\texttt{SL} & 0.789 & 0.792 & 0.738 & 0.656 & 0.552 & 0.842 & \textbf{0.864}\\
\texttt{GP} & 0.913 & 0.907 & 0.880 & 0.767 & 0.673 & 0.940 & \textbf{0.960}\\
\texttt{GPAS} & 0.899 & 0.918 & \textbf{0.949} & 0.522 & 0.494 & 0.940 & 0.937\\
\texttt{GPMVF} & 0.975 & \textbf{0.997} & 0.953 & 0.842 & 0.883 & 0.972 & 0.962\\
\texttt{GPOVY} & 0.952 & 0.838 & \textbf{1.000} & 0.994 & 0.625 & 0.590 & \textbf{1.000}\\
\texttt{SC} & 0.880 & \textbf{0.993} & 0.503 & 0.547 & 0.550 & 0.580 & 0.593\\
\texttt{EQ} & 0.712 & 0.719 & \textbf{0.763} & 0.647 & 0.705 & 0.755 & 0.741\\
\texttt{PAirP} & 0.058 & 0.106 & \textbf{0.264} & 0.043 & 0.024 & 0.077 & 0.048\\
\texttt{HT} & 0.664 & \textbf{0.924} & 0.815 & 0.672 & 0.580 & 0.395 & 0.807\\
  \hline
  \end{tabular}
  \label{table:ucr_scores_additional}
\end{table}

\subsection{Landmark sampling evaluation}
\label{sec:landmarks_sampling}
\changed{
We compare several sampling techniques to obtain a set of landmarks $L\subseteq X$ of prescribed size: furthest point sampling (FPS), random subsampling, and the density-aware approach (DA-FPS) proposed in~\cite{climaco2026densityaware} (see \cref{fig:samplings} for an illustration of these three approaches). \cref{table:sampling} evaluates those approaches by showing the associated matching distance to the reference sublevel Delaunay bifiltration on the noisy ring (in $\bbr^2$) and noisy sphere (in $\bbr^3$).
}

\begin{figure}
  \centering
  \begin{overpic}[width=.9\linewidth]{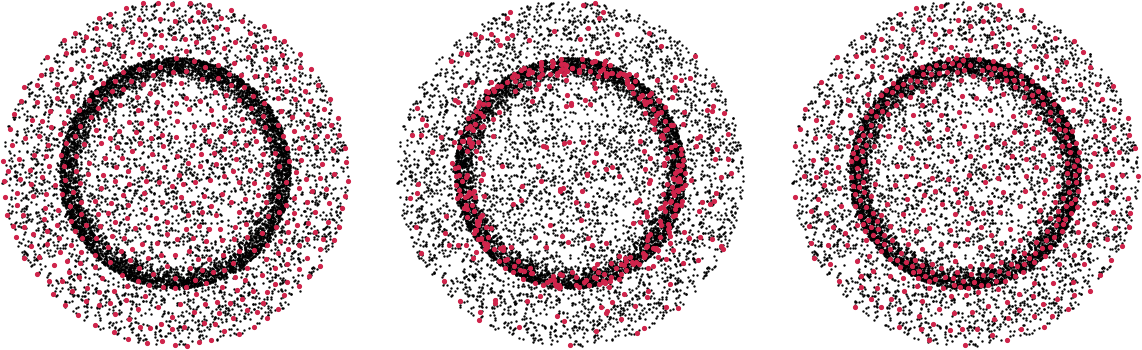}
    \put(13,-4){FPS}
    \put(46,-4){random}
    \put(80,-4){DA-FPS}
  \end{overpic}
  \vspace{.5cm}
  \caption{Three sampling approaches to get 500 landmarks (red dots) from an input $X\subset\bbr^2$ of size 10,000 (black dots).}
  \label{fig:samplings}
\end{figure}

\begin{table}
  \caption{Matching distance between the sublevel Delaunay and Flood bifiltrations for several landmark sampling techniques: classical and density-aware furthest point sampling, and random sampling. The input point sets have 10,000 points and 500 landmarks are selected. The matching distances are averaged over 100 runs.}
  \begin{tabular}{|r||rr|rrr|}
  \hline
         & \multicolumn{2}{c|}{Noisy ring ($\bbr^2$)}& \multicolumn{3}{c|}{Noisy sphere ($\bbr^3$)} \\
         & $\matchd$ ($H_0$)   & $\matchd$ ($H_1$)   & $\matchd$ ($H_0$)   & $\matchd$ ($H_1$)   & $\matchd$ ($H_2$)   \\
  \hline
  FPS    & $0.022\pm0.005$     & $0.020\pm0.005$     & $0.059\pm0.011$     & $0.052\pm0.005$     & $0.067\pm0.009$     \\
  random & $0.012\pm0.004$     & $0.012\pm0.004$     & $0.044\pm0.007$     & $0.032\pm0.011$     & $0.045\pm0.017$     \\
  DA-FPS & $0.012\pm0.003$     & $0.009\pm0.003$     & $0.049\pm0.008$     & $0.035\pm0.004$     & $0.038\pm0.007$     \\
  \hline
  \end{tabular}
  \label{table:sampling}
\end{table}

\subsection{Number of landmarks}
\label{sec:landmarks_number}

\changed{
To better illustrate how the number of landmarks has an influence in practice, we show how the classification accuracy score evolves when the number of landmarks increases for the toy synthetic dataset (in $\bbr^2$) in \cref{fig:failure_case}, left, and for the porous rock dataset (in $\bbr^3$) in \cref{fig:failure_case}, right.
Additionally, we use in \cref{fig:failure_case_2} a variant of the 2-dimensional toy synthetic dataset, with two classes whose point sets feature respectively 124 and 125 non-intersecting empty regions, each of radius 0.05, and placed randomly. The classification task involves determining which of these two classes the point set belongs to.
}

\changed{
The three datasets manifest different behavior.
For the toy synthetic dataset, few landmarks are required to obtain a good classification accuracy because low-density regions do not have to be captured individually to retrieve their diameter.
For the porous rock dataset, the classification accuracy increases more regularly with the number of landmarks, since smaller and smaller geometric details are captured in the point sets.
Finally, for the dataset depicted in \cref{fig:failure_case_2}, there is a sharp transition between low and high accuracy, because classifying correctly requires counting precisely the number of voids and therefore capturing them separately, which happens after a threshold number of landmarks is reached.
}

\begin{figure}
  \centering
  \includegraphics[width=\linewidth]{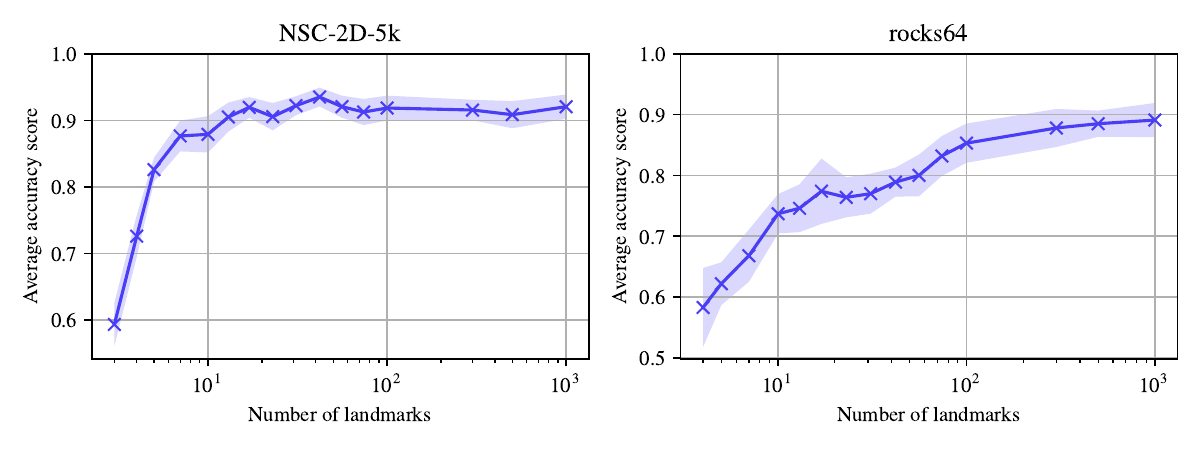}
  \caption{Accuracy score for two classification problems using the sublevel Flood bifiltration, as a function of the number of landmarks.}
  \label{fig:failure_case}
\end{figure}
\begin{figure}
  \centering
  \hspace{-7mm} 
  \begin{minipage}{.55\linewidth}
    \includegraphics[width=\linewidth]{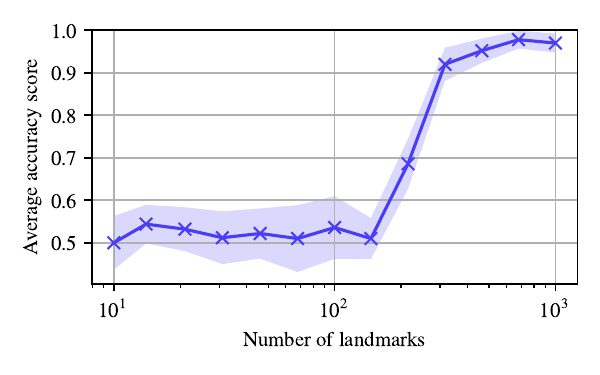}
  \end{minipage}
  \begin{minipage}{.43\linewidth}
    \begin{tabular}{cc}
      \includegraphics[width=.48\linewidth]{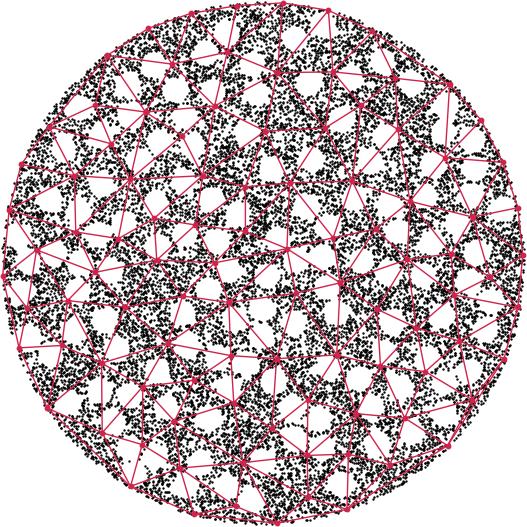} & \includegraphics[width=.48\linewidth]{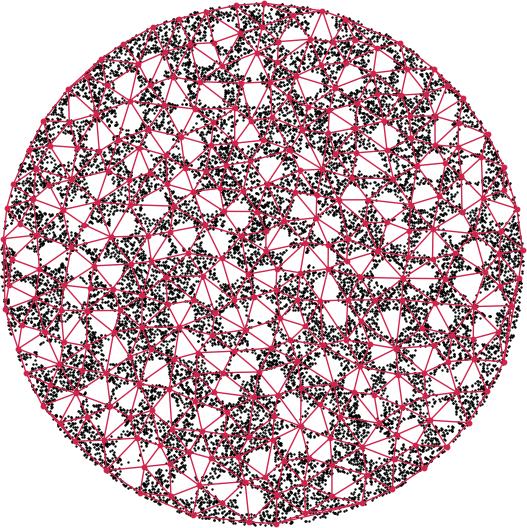} \\
      $|L|=100$ & $|L|=316$ \\
      $\text{avg. score}=0.54$ & $\text{avg. score}=0.95$
    \end{tabular}
  \end{minipage}
  \caption{Accuracy score for the classification problem involving counting the number of voids, using the sublevel Flood bifiltration, as a function of the number of landmarks. On the right, we show the Delaunay triangulation of the set of landmarks when the classification is unsuccessful ($|L|=100$) or successful ($|L|=316$).}
  \label{fig:failure_case_2}
\end{figure}

\changed{
  Finally, to provide a more visual insight into the influence of the number of landmarks, \cref{fig:mma_in_landmarks} shows the multiparameter module approximation of the sublevel Flood bifiltration as the number of landmarks increases and the sublevel Delaunay bifiltration is increasingly well approximated.
}

\begin{figure}
  \centering
  \includegraphics[width=\linewidth]{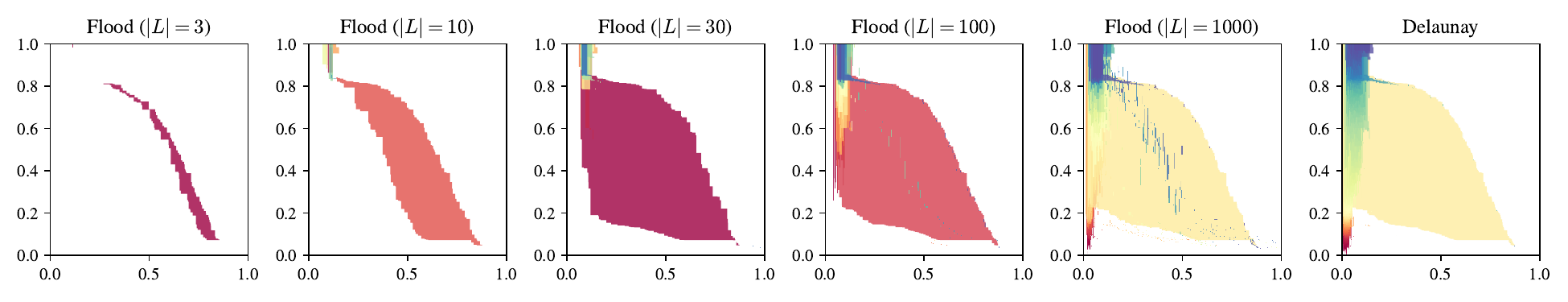}
  \caption{Multiparameter module approximation when the number of landmarks increases, for a point set $X$ of size 2000 sampled as a noisy ring in $\bbr^2$.}
  \label{fig:mma_in_landmarks}
\end{figure}

\subsection{Interest of GPU use}

\changed{
In order to clarify how the GPU batching helps to speed up the computation of the set of bigrades of the simplices, we show in \cref{fig:cpu_vs_gpu} the runtime comparison (bifiltration only, no MMA) between GPU (left number) and CPU (right number) on the 2-dimensional noisy ring and 3-dimensional noisy sphere datasets.
For this experiment, we used a Core i7-13850HX CPU (with 20 physical cores).
It illustrates that the use of the GPU is significantly profitable, especially starting from $\ddim=3$. This is because the higher number of samples (1540 samples for a tetrahedron against 210 for a triangle, with 20 samples per edge) makes the GPU batching more efficient.
Broadly speaking, our approach trades the costly PH computation on a large filtration for the costly but GPU-parallelizable construction of a much smaller one, on which PH is then much faster to compute.
}

\begin{figure}
  \centering
  \includegraphics[width=\linewidth]{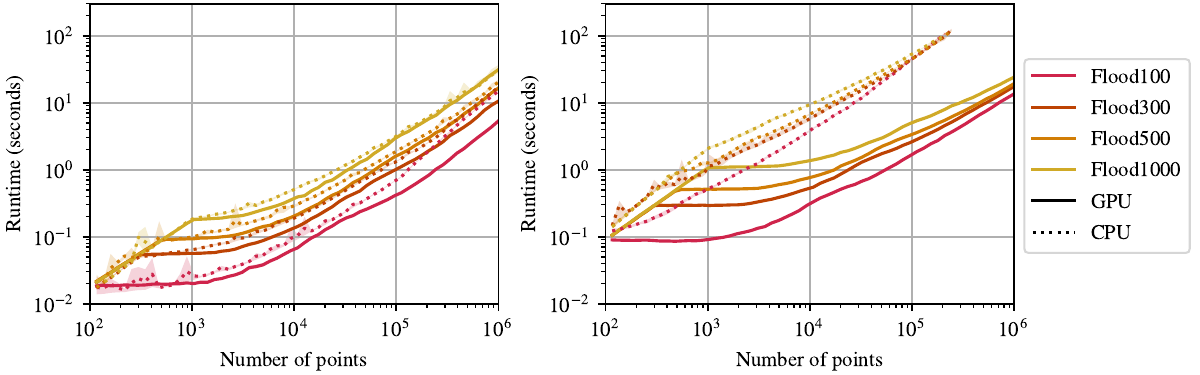}
  \caption{Comparison of runtimes for computing the sublevel Flood bifiltration (no MMA) on a GPU and a CPU (we used a Core i7-13850HX CPU for this experiment).}
  \label{fig:cpu_vs_gpu}
\end{figure}

\subsection{Masking strategy empirical evaluation}

\changed{
\cref{table:masking_evaluation} provides an empirical evaluation of the effect of the masking strategy described in \cref{sec:computational_approach} on the approximation (of the reference sublevel Delaunay bifiltration) and the runtime. A general observation is that---as expected---the exact version (i.e., taking the whole $X$ instead of $X^\sigma$ into account for all simplices $\sigma$) is significantly costlier in runtime and memory. In addition, empirically, the loss of precision of the approximate version compared to the exact one tends to be less significant when the size of $X$ increases. 
}

\begin{table}
  \centering
  \caption{Comparison between the exact (E) and masked (M) approaches (see \cref{sec:computational_approach}) in terms of runtime and approximation of the sublevel Delaunay bifiltration.}
  \begin{tabular}{|l|l|rr|rr|rr|rr|}
    \hline
    \multirow{2}{*}{$|X|$} & \multirow{2}{*}{$|L|$} & \multicolumn{2}{c|}{Runtime (s)} & \multicolumn{2}{c|}{$\matchd$ ($H_0$)} & \multicolumn{2}{c|}{$\matchd$ ($H_1$)} & \multicolumn{2}{c|}{$\matchd$ ($H_2$)} \\
    & & M & E & M & E & M & E & M & E \\ 
    \hline \multirow{4}{*}{$1000$}
    & $100$  & $0.14$ & $0.39$ & $0.17$ & $0.12$ & $0.04$ & $0.05$ & $0.05$ & $0.02$ \\
    & $300$  & $0.46$ & $1.38$ & $0.09$ & $0.09$ & $0.03$ & $0.03$ & $0.06$ & $0.01$ \\
    & $500$  & $0.76$ & $2.46$ & $0.07$ & $0.07$ & $0.03$ & $0.03$ & $0.06$ & $0.01$ \\
    & $1000$ & $1.70$ & $5.07$ & $0.01$ & $0.01$ & $0.03$ & $0.03$ & $0.02$ & $0.01$ \\
    \hline \multirow{4}{*}{$20000$}
    & $100$  & $0.55$ & $5.03$  & $0.05$ & $0.05$ & $0.08$ & $0.10$ & $0.10$ & $0.11$ \\
    & $300$  & $1.12$ & $18.85$ & $0.04$ & $0.05$ & $0.07$ & $0.06$ & $0.10$ & $0.08$ \\
    & $500$  & $1.65$ & $33.36$ & $0.03$ & $0.04$ & $0.06$ & $0.05$ & $0.08$ & $0.07$ \\
    & $1000$ & $2.99$ & $70.52$ & $0.03$ & $0.03$ & $0.05$ & $0.05$ & $0.05$ & $0.05$ \\
    \hline
  \end{tabular}
  \label{table:masking_evaluation}
\end{table}

\subsection{Codensity hyperparameter}

\changed{
The choice of the parameters of the estimators (e.g., bandwidth $h$ for KDE, $m$ for DTM) influences the accuracy in classification tasks, similarly to other methods. \cref{table:bandwidth} shows the accuracy obtained with the sublevel Flood (300 landmarks) and Delaunay bifiltrations on the 2-dimensional noisy swisscheese dataset with $x=0.05$ (see \cref{fig:synthetic_scores}, left) for Gaussian kernel density estimation with several bandwidths. We show the average over 10 splits. \cref{fig:synthetic_scores} used the same value 0.04 for all the methods.
}

\begin{table}
  \centering
  \caption{Average classification accuracy on the 2-dimensional swisscheese dataset with the sublevel Flood (300 landmarks) and Delaunay bifiltrations, using Gaussian KDE with several bandwidths $h$.}
 \begin{tabular}{|r|r|r|}
\hline
 $h$   & Flood (300 l.)       & Delaunay        \\
\hline
 $0.004$    & $0.793\pm0.027$ & $0.817\pm0.019$ \\
 $0.01$     & $0.891\pm0.016$ & $0.920\pm0.017$ \\
 $0.04$     & $0.916\pm0.015$ & $0.923\pm0.017$ \\
 $0.1$      & $0.774\pm0.029$ & $0.794\pm0.025$ \\
 $0.4$      & $0.429\pm0.035$ & $0.454\pm0.031$ \\
\hline
\end{tabular}
\label{table:bandwidth}
\end{table}

\section{Data description}
\label{sec_dataDescription}

This section contains a description of the synthetic and real datasets used throughout the paper.

\paragraph{Toy synthetic dataset}
Our first test case is a synthetic toy dataset, \texttt{noisy\_swiss\_cheese}, that aims to illustrate the typical case where multiparameter PH carries topological information that is not faithfully detected by either diameter-based or density-based single-parameter PH, or even by the combination of the two.
This dataset is made of points in the unit ball (in $\bbr^2$ and $\bbr^3$), uniformly sampled, except in a variable number $k\in\llbracket1,10\rrbracket$ of spherical regions that have a lower density.
For a given point set, these lower-density regions have the same diameter $\delta$ among $\{\delta_0,\delta_0+x/3,\delta_0+2x/3,\delta_0+x\}$, with $\delta_0=0.1$ and $x>0$.
The classification task involves determining which of the four diameter classes the point set belongs to.
The parameter $x$ quantifies the difference in class diameters and the problem's difficulty: as $x$ increases, the classes become easier to separate.
We only use $H_1$ (for $\bbr^2$) or $H_2$ (for $\bbr^3$) for all the methods.
Results are shown in \cref{fig:synthetic_scores}, left.

\paragraph{Porous rocks dataset}
The second dataset models porous material and is generated with the \texttt{PoreSpy} library~\cite{gostick2019porespy}.
It generates binary $64^3$ grids that we convert into a point set, with some Gaussian noise (to avoid the non-generic position due to the lattice structure).
The porosity is chosen randomly, uniformly in $[0.05,0.95]$.
The classification task consists of determining which porous material generator was used, between:
\texttt{blobs} that generates random noise combined with Gaussian blur, with a blobiness parameter chosen uniformly in $[0.5,2]$;
and \texttt{fractals} which is simplex noise with a frequency chosen uniformly in $[0.05,0.2]$.
To measure the ability of multiparameter PH to handle noise and outliers, we parameterize the difficulty of the problem with an ambient noise parameter which represents the probability $p$ of each element of the binary grid to be set to 1 (after the binary generation and before the point set conversion). We only use $H_2$.
Results are shown in \cref{fig:synthetic_scores}, right.

\paragraph{Time series datasets}
We perform experiments involving time series classification. TDA has been applied to such problems~\cite{perea2015sliding} by applying PH to the time-delay Takens embeddings~\cite{takens2006detecting} of the time series. Indeed, signal properties (e.g., periodicity) may translate into specific topological features in the embedding (e.g., loops).
Multiparameter PH was also recently used in this context (see, e.g., \cite{carriere2020multiparameter,loiseaux2023framework}), with the idea that noise in the time series transmits to the embedding.
We take several time series datasets from the UCR archive~\cite{dau2019ucr}
%\footnote{available at \url{https://www.timeseriesclassification.com}}
of various sizes, lengths, numbers of classes and coming from different scientific fields (see \cref{table:ucr_datasets} for the description of the datasets used) and use the suggested train and test datasets.
We encode them into $\bbr^3$, with time-delay Takens embedding, and apply the same pipelines as above, considering $H_0$ and $H_1$, with a DTM with parameter $m=0.1$ as codensity estimation.
Results are shown in \cref{table:ucr_scores,table:ucr_scores_additional},
in which we also report the baseline scores provided by~\cite{dau2019ucr} (except for \texttt{KLC}, \texttt{CD} and \texttt{BH}, which we ran ourselves as they do not belong to the original UCR dataset):
%nearest neighbor with Euclidean distance (ED), and dynamic time warping with learned (DTWl) and constant (DTWc) window width.
nearest neighbor with Euclidean distance (ED), and dynamic time warping with constant window width (DTWc).

\begin{figure}
  \centering
  \begin{overpic}[angle=90,width=\linewidth]{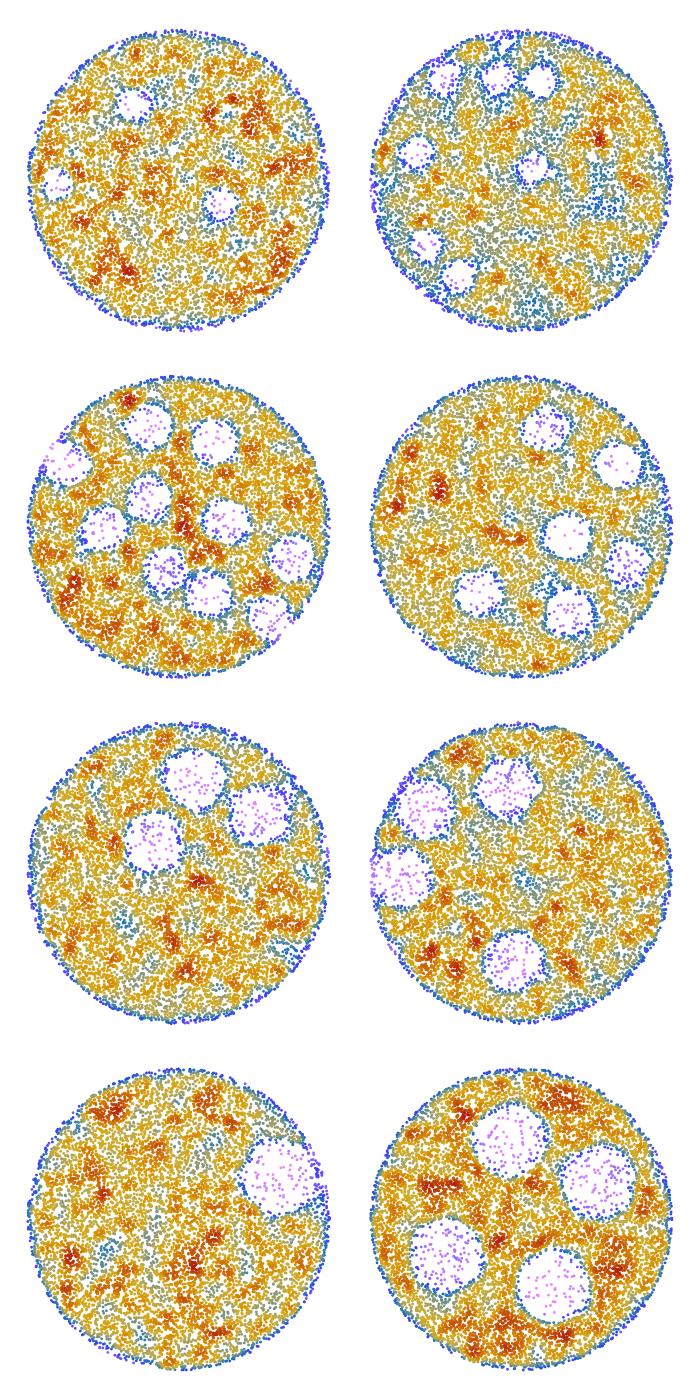}
    \put(9,50){$\delta=0.1$}
    \put(32.5,50){$\delta=0.147$}
    \put(57,50){$\delta=0.193$}
    \put(83,50){$\delta=0.24$}
  \end{overpic}
  \caption{Two examples of each class in the \texttt{noisy\_swiss\_cheese} dataset in $\bbr^2$, with 10,000 points in each point set and difficulty parameter $x=0.14$.}
\end{figure}

\begin{table}
  \centering
    \caption{The list of the UCR datasets used in this paper, with their acronym and their full name, the size of train and test sets, the length of the time series and the number of classes.}
  \begin{tabular}{|l|l|rrrrl|}
  \hline
  Dataset   & UCR dataset full name          &   \rotatebox{90}{Train} &   \rotatebox{90}{Test} &   \rotatebox{90}{Length} &   \rotatebox{90}{Classes} & Category     \\
  \hline
  \texttt{DPOAG}     & DistalPhalanxOutlineAgeGroup   &     400 &    139 &       80 &         3 & image        \\
  \texttt{DPOC}      & DistalPhalanxOutlineCorrect    &     600 &    276 &       80 &         2 & image        \\
  \texttt{DPTW}      & DistalPhalanxTW                &     400 &    139 &       80 &         6 & image        \\
  \texttt{PPOAG}     & ProximalPhalanxOutlineAgeGroup &     400 &    205 &       80 &         3 & image        \\
  \texttt{PPOC}      & ProximalPhalanxOutlineCorrect  &     600 &    291 &       80 &         2 & image        \\
  \texttt{PPTW}      & ProximalPhalanxTW              &     400 &    205 &       80 &         6 & image        \\
  \texttt{ECG200}    & ECG200                         &     100 &    100 &       96 &         2 & ecg          \\
  \texttt{IPD}       & ItalyPowerDemand               &      67 &   1029 &       24 &         2 & sensor       \\
  \texttt{MI}        & MedicalImages                  &     381 &    760 &       99 &        10 & image        \\
  \texttt{Plane}     & Plane                          &     105 &    105 &      144 &         7 & sensor       \\
  \texttt{SL}        & SwedishLeaf                    &     500 &    625 &      128 &        15 & image        \\
  \texttt{GP}        & GunPoint                       &      50 &    150 &      150 &         2 & har          \\
  \texttt{GPAS}      & GunPointAgeSpan                &     135 &    316 &      150 &         2 & har          \\
  \texttt{GPMVF}     & GunPointMaleVersusFemale       &     135 &    316 &      150 &         2 & har          \\
  \texttt{GPOVY}     & GunPointOldVersusYoung         &     136 &    315 &      150 &         2 & har          \\
  \texttt{PC}        & PowerCons                      &     180 &    180 &      144 &         2 & device       \\
  \texttt{SC}        & SyntheticControl               &     300 &    300 &       60 &         6 & simulated    \\
  \texttt{EQ}        & Earthquakes                    &     322 &    139 &      512 &         2 & sensor       \\
  \texttt{LN2}       & Lightning2                     &      60 &     61 &      637 &         2 & sensor       \\
  \texttt{Haptics}   & Haptics                        &     155 &    308 &     1092 &         5 & motion       \\
  \texttt{ACSF1}     & ACSF1                          &     100 &    100 &     1460 &        10 & device       \\
  \texttt{PigCVP}    & PigCVP                         &     104 &    208 &     2000 &        52 & hemodynamics \\
  \texttt{PArtP}     & PigArtPressure                 &     104 &    208 &     2000 &        52 & hemodynamics \\
  \texttt{PAirP}     & PigAirwayPressure              &     104 &    208 &     2000 &        52 & hemodynamics \\
  \texttt{HT}        & HouseTwenty                    &      40 &    119 &     2000 &         2 & device       \\
  \texttt{Rock}      & Rock                           &      20 &     50 &     2844 &         4 & spectro      \\
  \texttt{KLC}       & KeplerLightCurves              &     920 &    399 &     4767 &         7 & sensor       \\
  \texttt{CD}        & CatsDogs                       &     164 &    164 &    14773 &         2 & audio        \\
  \texttt{BH}        & BinaryHeartbeat                &     204 &    205 &    18530 &         2 & audio        \\
  \hline
  \end{tabular}
  \label{table:ucr_datasets}
\end{table}

\end{document}